\documentclass[pdflatex,sn-basic,Numbered]{sn-jnl}

\usepackage{amsmath,amssymb,mathtools}
\usepackage{algorithm}
\usepackage{algpseudocode}
\usepackage{array}
\usepackage{booktabs}
\usepackage{enumitem}
\usepackage{graphicx}
\usepackage{microtype}
\usepackage{xcolor}

\let\table\tableorg
\let\endtable\endtableorg
\hypersetup{hypertexnames=false}

\theoremstyle{thmstyleone}
\newtheorem{theorem}{Theorem}[section]
\newtheorem{lemma}[theorem]{Lemma}
\newtheorem{proposition}[theorem]{Proposition}
\newtheorem{corollary}[theorem]{Corollary}
\theoremstyle{thmstylethree}
\newtheorem{definition}[theorem]{Definition}
\newtheorem{assumption}{Assumption}
\theoremstyle{thmstyletwo}
\newtheorem{remark}[theorem]{Remark}

\newcommand{\M}{\mathcal M}
\newcommand{\X}{\mathcal X}
\newcommand{\Xstar}{\mathcal X^\star}
\newcommand{\R}{\mathbb R}
\newcommand{\Exp}{\operatorname{Exp}}
\newcommand{\Log}{\operatorname{Log}}
\newcommand{\grad}{\operatorname{grad}}

\newcommand{\diam}{\operatorname{diam}}
\newcommand{\inj}{\operatorname{inj}}
\newcommand{\Dset}{\mathcal V}
\newcommand{\Xlo}{\underline X}
\newcommand{\Xup}{\overline X}
\newcommand{\inner}[2]{\left\langle #1,#2\right\rangle}
\newcommand{\norm}[1]{\left\lVert #1\right\rVert}

\begin{document}

\title[Fast Rates for Open-Loop Riemannian Frank--Wolfe]{Open-Loop Riemannian Frank--Wolfe: Fast Rates under Error Bounds and Scaling Inequalities}

\author*[1]{\fnm{Kangming} \sur{Chen}}\email{kangming@tmu.ac.jp}

\affil*[1]{\orgdiv{Faculty of Economics and Business Administration},
\orgname{Tokyo Metropolitan University}, \orgaddress{\street{1-1 Minami-Osawa}, \city{Hachioji}, \postcode{192--0397}, \state{Tokyo}, \country{Japan}}}
\abstract{%

We explore fast convergence of the Riemannian Frank--Wolfe method for smooth geodesically convex optimization over compact feasible sets.  Hadamard manifolds are the main setting. On general complete manifolds, the analysis accounts for all feasible minimizing geodesics. We consider the open loop step-size $\eta_k=a/(k+a)$, which only uses the iteration index.  Under a local H\"olderian error bound and local length-normalized directional scaling, every $a>2$ gives the eventual rate $O(k^{-1/(1-\theta)})$ for $\theta\in(0,1/2]$. An interior-ball condition yields $O(k^{-2})$ for strongly geodesically convex objectives. Under an exact Riemannian scaling inequality and a uniform positive lower bound on the gradient norm, every $a\geq2$ gives $O(k^{-a})$ after an explicit threshold index. The same rate holds for the smallest Frank--Wolfe gap over the most recent half of the iterates.
For geodesic balls of radius \(R<\pi/2\) in the unit sphere, we establish the scaling inequality with $\alpha_R=\tfrac12\cot R$, yielding $O(k^{-a})$ primal error and recent-window gap rates. We also analyze the standard gap-feedback short step under the local error-bound conditions, obtaining $O(k^{-1/(1-2\theta)})$ for $\theta<1/2$ and a linear rate for $\theta=1/2$.  Numerical experiments illustrate the predicted rates and compare iteration-only and feedback-based step selection.
}

\keywords{Riemannian optimization, Frank--Wolfe method, open-loop step sizes, geodesic convexity, H\"olderian error bounds }
\pacs[MSC Classification]{ 90C25, 90C30, 53C22}

\maketitle

\section{Introduction}
\label{sec:introduction}

We consider smooth geodesically convex optimization over a compact feasible set on a Riemannian manifold.
Frank--Wolfe methods avoid projections by solving a linear minimization subproblem \citep{frank1956algorithm,jaggi2013revisiting}.
Their Riemannian extensions replace Euclidean search directions with feasible minimizing geodesics and have been studied in~\cite{weber2023riemannian} and subsequent works~\citep{chen2026riemannian,scieur2026strongly}.

Open-loop step-size rules for conditional-gradient methods date back to \citet{dunn1978conditional}.  More recently,~\citet{wirth2023acceleration} showed that the Frank--Wolfe method with predetermined step sizes can achieve faster rates under suitable local conditions.  ~\citet{wirth2025affine} later derived affine-invariant bounds for fixed-parameter schedules, including an eventual $O(k^{-\ell})$ rate for $\eta_k=\ell/(k+\ell)$ under the strong $(M,1)$-growth condition.
A log-adaptive open-loop rule that removes the need to choose $\ell$ in advance  subsequently proposed in~\cite{wirth2025adaptive}.
By contrast, feedback strategies use current oracle or problem information.
Under strict relative-interior conditions, classical Frank--Wolfe can attain a linear rate~\citep{guelat1986comments}. Another standard choice is the short-step rule, which selects the step size from a quadratic upper model along the current Frank--Wolfe direction~\citep{pokutta2024short}. Faster Euclidean Frank--Wolfe rates have also been linked to feasible-set geometry.  Garber and Hazan proved an \(O(1/k^2)\) rate for smooth strongly convex
objectives over strongly convex feasible sets, while Kerdreux et al. extended this perspective to uniformly convex sets and affine-invariant scaling conditions~\citep{garber2015faster,kerdreux2021affine,kerdreux2021uniform}.

For Riemannian Frank--Wolfe, \citet{weber2023riemannian} established the standard $O(1/k)$ rate and a feedback-based linear rate under strong geodesic convexity and strict interiority, while~\citet{scieur2026strongly} proved a global linear rate for a short step under a scaling inequality and a gradient lower bound.  Both fast-rate results use current-iterate information.
Retraction-based generalized conditional-gradient methods instead define the oracle and update through a retraction and its local inverse \citep{chen2026riemannian}.
It remains open whether the standard RFW update can achieve a rate faster than $O(1/k)$ with a predetermined step size.

Our first result is an intrinsic counterpart of the Euclidean open-loop analysis of \citet{wirth2023acceleration,wirth2025affine}.  Exact oracle solutions and minimizing geodesics need not be unique, so we formulate length-normalized directional scaling using the largest length among all exact oracle solutions.  The resulting estimate therefore holds for every admissible oracle selection.  Combined with a local H\"olderian error bound (HEB), it gives the eventual rate $O(k^{-1/(1-\theta)})$ for every $a>2$ and $\theta\in(0,1/2]$.  Compactness and the global $O(1/k)$ bound ensure finite entry into the local region.  An interior ball verifies directional scaling, and strong geodesic convexity then gives $O(k^{-2})$.

A second mechanism uses the exact Riemannian scaling inequality of \citet{scieur2026strongly}.
Together with a uniform positive lower bound on the gradient norm, it yields the strong gap-growth estimate of \citet{wirth2025affine}, uniformly over exact oracle solutions.
We obtain $O(k^{-a})$ after an explicit threshold index, both for the primal error and for the minimum Frank--Wolfe gap over the most recent half of the iterates.
A direct geometric argument shows that every spherical ball of radius $R<\pi/2$ satisfies the scaling inequality with $\alpha_R=\tfrac12\cot R$ in the unit sphere.
For comparison, we also analyze the standard gap-feedback short step under the local error-bound conditions.
Table~\ref{tab:related-work} summarizes the parameter ranges, assumptions, and rates.

\begin{table}[htbp]
\centering
\caption{Representative primal convergence rates for  Frank--Wolfe
methods.  For comparison, all rows are stated for smooth convex objectives over compact feasible sets; the table lists only the additional conditions that distinguish the rates.  An ``eventual'' rate holds after entry into the
corresponding local regime.}
\label{tab:related-work}
\footnotesize
\setlength{\tabcolsep}{3pt}
\begin{tabular}{@{}
  >{\raggedright\arraybackslash}p{0.22\textwidth}
  >{\raggedright\arraybackslash}p{0.20\textwidth}
  >{\raggedright\arraybackslash}p{0.38\textwidth}
  >{\raggedright\arraybackslash}p{0.14\textwidth}@{}}
\toprule
Method and geometry & Step rule & Additional conditions & Primal rate \\
\midrule
Euclidean FW \citep{wirth2023acceleration}
 & open loop, $4/(k+4)$
 & global HEB; unique relative-interior minimizer
 & eventual $O(k^{-1/(1-\theta)})$ \\

Euclidean FW \citep{wirth2025affine}
 & open loop, $\ell/(k+\ell)$, $\ell\in\mathbb N$
 & strong $(M,1)$-growth
 & eventual $O(k^{-\ell})$ \\

RFW \citep{weber2023riemannian}
 & open loop, $2/(k+2)$
 & none beyond the standing assumptions
 & global $O(1/k)$ \\

RFW \citep{weber2023riemannian}
 & primal-gap feedback using problem constants
 & strong geodesic convexity; optimizer in an interior ball
 & global linear \\

RFW \citep{scieur2026strongly}
 & gap-based short step
 & global Riemannian scaling inequality;
   $\inf_{x\in\mathcal X}\|\grad f(x)\|_x>0$
 & global linear \\
\midrule
RFW-OL($a$) (this paper)
 & open loop, $a/(k+a)$, $a>2$
 & local HEB; local length-normalized directional scaling
 & eventual $O(k^{-1/(1-\theta)})$ \\

RFW-OL($a$) (this paper)
 & open loop, $a/(k+a)$, $a\geq2$
 & global Riemannian set scaling ;
   $\inf_{x\in\mathcal X}\|\grad f(x)\|_x>0$
 & eventual $O(k^{-a})$ \\

RFW short step (this paper)
 & feedback using $g_k$, $\ell_k$, and $\widehat L$
 & local HEB; local length-normalized directional scaling
 & eventual $O(k^{-1/(1-2\theta)})$ if $\theta<1/2$;
   eventual linear if $\theta=1/2$ \\
\bottomrule
\end{tabular}
\end{table}

The paper is organized as follows.
Section~\ref{sec:setting} introduces the geometric setting and RFW oracle.
Section~\ref{sec:algorithm} describes the open-loop method and proves its baseline rate, after which Sections~\ref{sec:acceleration} and~\ref{sec:geometry} develop the two fast-rate analyses and verify their geometric conditions.  Numerical results are presented in Section~\ref{sec:numerics}.
Section~\ref{sec:conclusion} concludes the paper.

\section{Problem setting and the Riemannian Frank--Wolfe oracle}
\label{sec:setting}

Let $(\M,  g)$ be a connected, complete, finite-dimensional Riemannian manifold with Riemannian distance $d$.  For $x\in\M$, let $T_x\M$ denote the tangent space, with inner product $\inner{\cdot}{\cdot}_x$ and associated norm $\norm{\cdot}_x$.  The Riemannian gradient is defined by $\mathrm{d}f(x)[\xi]=\inner{\grad f(x)}{\xi}_x$ for all $\xi\in T_x\M$. For $v\in T_x\M$, $t\mapsto\Exp_x(tv)$ is the geodesic starting from $x$ with initial velocity $v$.
A Hadamard manifold is complete, simply connected, and has nonpositive sectional curvature.  By the Cartan--Hadamard theorem, $\Exp_x$ is a global diffeomorphism for every $x\in\M$.  Hence $\Log_x:=\Exp_x^{-1}$ is globally defined, and any two points are joined by a unique minimizing geodesic~\citep{sak1996riemannian}.

The analysis is formulated on a general complete manifold by allowing all feasible minimizing geodesics, while the Hadamard case remains the main algorithmic and numerical setting.
Let $\X\subset\M$ be nonempty and compact, and set
\[
  D:=\diam(\X)=\sup_{x,y\in\X}d(x,y)<\infty.
\]
The case $D=0$ is trivial, so we  assume $D>0$ in this paper.
By the Hopf--Rinow theorem, completeness guarantees a minimizing geodesic between any two points of $\M$~\citep{sak1996riemannian}, but whether such a geodesic remains in $\X$ is not guaranteed.  We therefore impose the following feasibility condition.

\begin{definition}[Weak minimizing-geodesic convexity]
The set $\X$ is \emph{weakly minimizing-geodesically convex} if, for every $x,y\in\X$, there exists at least one distance-minimizing geodesic from $x$ to $y$ whose image is contained in $\X$.
\end{definition}

Throughout this paper, we assume that $\X$ is weakly minimizing-geodesically convex.
To avoid fixing a global choice of logarithmic map, for each $x\in\X$ we consider the set of feasible minimizing initial velocities
\begin{equation}
\Dset(x):=
\left\{
v\in T_x\M:
\begin{array}{l}
\Exp_x(v)\in\X,\quad \norm{v}_x=d(x,\Exp_x(v)),\\
\Exp_x(tv)\in\X\quad\text{for all }t\in[0,1]
\end{array}
\right\}.
\label{eq:feasible-directions}
\end{equation}
If $\M$ is Hadamard, then $\Dset(x)=\{\Log_x(y):y\in\X\}$.
On a general complete manifold, $\Dset(x)$ instead retains the initial velocities of all feasible minimizing geodesics, without requiring a single-valued global logarithmic map.
\begin{lemma}
\label{lem:direction-compact}
For every $x\in\X$, the set $\Dset(x)$ is nonempty and compact in $T_x\M$.  Moreover,
$
\norm{v}_x\leq D
$ for every $v\in\Dset(x).$
\end{lemma}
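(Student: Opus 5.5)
The proof splits into three parts: nonemptiness, the norm bound, and closedness. Compactness will then follow from Heine--Borel in the finite-dimensional space $T_x\M$.

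\emph{Nonemptiness.} We take $y=x$ and $v=0\in T_x\M$. Then $\Exp_x(t\cdot 0)=x\in\X$ for all $t\in[0,1]$, and $\norm{0}_x=0=d(x,x)$, so $0\in\Dset(x)$. Weak minimizing-geodesic convexity is not even needed for this step.

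\emph{Norm bound.} For $v\in\Dset(x)$, set $y=\Exp_x(v)$. By definition $y\in\X$, so $\norm{v}_x=d(x,y)\le \diam(\X)=D$. Hence $\Dset(x)$ lies in the closed ball of radius $D$ in $T_x\M$, and in particular is bounded.

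\emph{Closedness.} This is the only step requiring care. Let $v_n\in\Dset(x)$ with $v_n\to v$ in $T_x\M$. The map $\Exp_x$ is continuous on all of $T_x\M$ by completeness, since geodesics extend for all time. So for each fixed $t\in[0,1]$ we have $\Exp_x(tv_n)\to\Exp_x(tv)$. Since $\X$ is compact, it is closed, and therefore $\Exp_x(tv)\in\X$ for every $t\in[0,1]$; the case $t=1$ gives $\Exp_x(v)\in\X$. For the length condition, the norm and the distance $d$ are continuous, and $\Exp_x(v_n)\to\Exp_x(v)$. Passing to the limit in $\norm{v_n}_x=d(x,\Exp_x(v_n))$ gives $\norm{v}_x=d(x,\Exp_x(v))$. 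Thus $v\in\Dset(x)$, and $\Dset(x)$ is closed.

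Being closed and bounded in a finite-dimensional normed space, $\Dset(x)$ is compact.
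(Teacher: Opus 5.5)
Your proof is correct and follows the paper's argument essentially step for step. You show $0\in\Dset(x)$, bound $\norm{v}_x=d(x,\Exp_x(v))\le D$, get closedness from continuity of $\Exp_x$ and $d$ together with closedness of $\X$, and conclude compactness because $T_x\M$ is finite dimensional.
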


\begin{proof}
Since $\Exp_x(0)=x$, we have $0\in\Dset(x)$.  For any
$v\in\Dset(x)$,
\[
 \norm{v}_x=d(x,\Exp_x(v))\leq D,
\]
so $\Dset(x)$ is bounded.
Let $(v_j)_{j\geq1}\subseteq\Dset(x)$ satisfy $\norm{v_j-v}_x\to0$.  For any fixed $t\in[0,1]$, continuity of $\Exp_x$ and closedness of $\X$ give
\[
 \Exp_x(tv)=\lim_{j\to\infty}\Exp_x(tv_j)\in\X.
\]
Moreover,
\[
 \norm{v}_x
 =\lim_{j\to\infty}\norm{v_j}_x
 =\lim_{j\to\infty}d(x,\Exp_x(v_j))
 =d(x,\Exp_x(v)).
\]
Hence $v\in\Dset(x)$, and $\Dset(x)$ is closed.  Compactness follows because $T_x\M$ is finite dimensional.
\end{proof}

We consider
\begin{equation}
  \min_{x\in\X} f(x),
  \label{eq:problem}
\end{equation}
where $f$ is differentiable on an open neighborhood of $\X$.  Define
$$
 f^\star:=\min_{x\in\X}f(x),\qquad
 \Xstar:=\arg\min_{x\in\X}f(x),\qquad
 h(x):=f(x)-f^\star.
$$
Since $f$ is continuous and $\X$ is compact, the minimum is attained and
$\Xstar\neq\varnothing$.

\begin{assumption}[Convexity and a quadratic upper model on admissible geodesics]
\label{ass:regularity}
For every $x\in\X$, $v\in\Dset(x)$, and $t\in[0,1]$,
\begin{align}
 f(\Exp_x(tv))
 &\leq (1-t)f(x)+t f(\Exp_x(v)),
 \label{eq:gconvex}\\
 f(\Exp_x(tv))
 &\leq f(x)+t\inner{\grad f(x)}{v}_x
       +\frac{L}{2}t^2\norm{v}_x^2
 \label{eq:gsmooth}
\end{align}
for a constant $L>0$ independent of $x$, $v$, and $t$.
\end{assumption}
Both inequalities are standard in Riemannian optimization.  The first is the geodesic-convexity inequality, while the second is the quadratic upper model
associated with geodesic smoothness; see
\citet[Definitions~1 and~4]{zhang2016first} and
\citet[Section~2.1]{weber2023riemannian}.
The conditions are imposed along every admissible minimizing geodesic, so the
analysis does not depend on a particular geodesic selection.
The geodesic-convexity inequality implies
\begin{equation}
 f(\Exp_x(v))
 \geq f(x)+\inner{\grad f(x)}{v}_x,
 \qquad v\in\Dset(x).
 \label{eq:first-order-convexity}
\end{equation}

\begin{remark}[Relation to standard smoothness]
The quadratic upper model follows from an $L$-Lipschitz Riemannian gradient
along admissible minimizing geodesics, where gradients are compared by
parallel transport.  For $f\in C^2$, a sufficient condition is
$\|\operatorname{Hess}f\|_{\rm op}\leq L$ along these geodesics
\citep[Section~10.4]{boumal2023introduction}.
\end{remark}

\begin{definition}[Strong convexity along admissible minimizing geodesics]
\label{def:strong-gconvexity}
For $\mu>0$, the objective $f$ is \emph{$\mu$-strongly geodesically convex
along the admissible minimizing geodesics in $\X$} if, for every $x\in\X$,
$v\in\Dset(x)$, and $t\in[0,1]$,
\begin{equation}
 f(\Exp_x(tv))
 \leq (1-t)f(x)+t f(\Exp_x(v))
       -\frac{\mu}{2}t(1-t)\norm{v}_x^2.
 \label{eq:strong-gconvexity}
\end{equation}
In particular, differentiation at $t=0$ gives
\begin{equation}
 f(\Exp_x(v))
 \geq f(x)+\inner{\grad f(x)}{v}_x
       +\frac{\mu}{2}\norm{v}_x^2.
 \label{eq:first-order-strong-convexity}
\end{equation}
The quantifier over $v\in\Dset(x)$ requires the inequality along every
admissible feasible minimizing geodesic in \eqref{eq:feasible-directions}.
\end{definition}

The exact RFW oracle is the intrinsic counterpart of the Euclidean linear minimization oracle (LMO)~\cite{weber2023riemannian}.
At $x\in\X$, its set-valued form is
\begin{equation}
  v(x)\in\arg\min_{v\in\Dset(x)}
  \inner{\grad f(x)}{v}_x.
  \label{eq:rfw-oracle}
\end{equation}
Define its compact solution set and maximal solution length by
\begin{equation}
 \mathcal S(x):=\arg\min_{v\in\Dset(x)}
 \inner{\grad f(x)}{v}_x,
 \qquad
 \bar\ell(x):=\max_{v\in\mathcal S(x)}\norm{v}_x.
 \label{eq:oracle-solution-set}
\end{equation}
By Lemma~\ref{lem:direction-compact}, $\Dset(x)$ is compact, and hence the
continuity of the linear objective implies that $\mathcal S(x)$ is nonempty
and compact.  Distinct elements of $\mathcal S(x)$ may correspond to different
endpoints or, outside the Hadamard setting, to different minimizing geodesics
with the same endpoint.
Define
$$
  g(x):=
  -\min_{v\in\Dset(x)}
  \inner{\grad f(x)}{v}_x .
$$
For any selection $v(x)\in\mathcal S(x)$,
\[
  g(x)=-\inner{\grad f(x)}{v(x)}_x,
  \qquad
  \ell(x):=\norm{v(x)}_x.
\]
Thus the gap $g(x)$ is independent of the oracle selection, whereas $\ell(x)$ may depend on it.  In particular,
\[
  \ell(x)\leq\bar\ell(x)\leq D.
\]

\section{RFW with open-loop step sizes}
\label{sec:algorithm}

We consider the Riemannian Frank--Wolfe method with the open-loop schedule $\eta_k=a/(k+a)$, $a\ge2$.  On a Hadamard manifold, our oracle formulation coincides with that of \citet{weber2023riemannian}, and $a=2$ gives the standard schedule.

\begin{algorithm}[H]
\caption{RFW with the $a/(k+a)$ open-loop schedule (RFW-OL($a$))}
\label{alg:ol-rfw}
\begin{algorithmic}[1]
\Require $x_0\in\X$ and $a\geq2$
\For{$k=0,1,2,\ldots$}
  \State $v_k\in\arg\min_{v\in\Dset(x_k)}
         \inner{\grad f(x_k)}{v}_{x_k}$
  \State $g_k\gets-\inner{\grad f(x_k)}{v_k}_{x_k}$
  \If{$g_k=0$}
    \State \Return $x_k$
  \EndIf
  \State $\eta_k\gets a/(k+a)$
  \State $x_{k+1}\gets\Exp_{x_k}(\eta_k v_k)$
\EndFor
\end{algorithmic}
\end{algorithm}
We first give feasibility and the associated gap result.
\begin{lemma}
\label{lem:gap-domination}
Suppose Assumption~\ref{ass:regularity} holds.  Then
\begin{equation}
  g(x)\geq h(x),\qquad x\in\X.
  \label{eq:gap-dominates}
\end{equation}
Moreover, $g(x)=0$ if and only if $x\in\Xstar$.  Every iterate generated by Algorithm~\ref{alg:ol-rfw} belongs to $\X$, and hence
\[
  g_k=g(x_k)\geq h_k:=h(x_k).
\]
\end{lemma}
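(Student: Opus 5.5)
The plan is to prove the three claims in order: gap domination, the characterization of $g(x)=0$, and feasibility of the iterates.

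\emph{Gap domination.} Fix $x\in\X$ and a minimizer $x^\star\in\Xstar$. Weak minimizing-geodesic convexity gives a minimizing geodesic from $x$ to $x^\star$ contained in $\X$. Its initial velocity $v^\star$ therefore satisfies $\Exp_x(v^\star)=x^\star$, $\norm{v^\star}_x=d(x,x^\star)$, and $\Exp_x(tv^\star)\in\X$ for $t\in[0,1]$, so $v^\star\in\Dset(x)$. The first-order convexity inequality \eqref{eq:first-order-convexity} then yields
\[
f^\star=f(\Exp_x(v^\star))\ge f(x)+\inner{\grad f(x)}{v^\star}_x .
\]
Since $g(x)$ is minus the minimum of the linear functional over $\Dset(x)$, we get
\[
g(x)\ge -\inner{\grad f(x)}{v^\star}_x\ge f(x)-f^\star=h(x).
\]

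\emph{Characterization of $g(x)=0$.} Because $0\in\Dset(x)$ by Lemma~\ref{lem:direction-compact}, we always have $g(x)\ge0$. If $g(x)=0$, then $0\ge g(x)\ge h(x)\ge0$ forces $x\in\Xstar$. The converse is the main point needing care. Let $x\in\Xstar$ and $v\in\Dset(x)$. The curve $t\mapsto\Exp_x(tv)$ stays in $\X$ for $t\in[0,1]$, so $\phi(t):=f(\Exp_x(tv))\ge f(x)=\phi(0)$. Since $f$ is differentiable on an open neighborhood of $\X$, $\phi$ is differentiable at $0$ from the right, and
\[
\phi'(0)=\inner{\grad f(x)}{v}_x\ge0 .
\]
Hence $\min_{v\in\Dset(x)}\inner{\grad f(x)}{v}_x\ge0$, which combined with $g(x)\ge 0$ gives $g(x)=0$. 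Alternatively, the upper model \eqref{eq:gsmooth} gives $0\le t\inner{\grad f(x)}{v}_x+\tfrac L2t^2\norm{v}_x^2$; dividing by $t$ and letting $t\downarrow0$ reaches the same conclusion.

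\emph{Feasibility of the iterates.} We argue by induction, with $x_0\in\X$ as the base case. If $x_k\in\X$ and the algorithm does not stop, then $v_k\in\Dset(x_k)$ and $\eta_k=a/(k+a)\in(0,1]$ because $k\ge0$ and $a\ge2$. The definition \eqref{eq:feasible-directions} then gives $x_{k+1}=\Exp_{x_k}(\eta_kv_k)\in\X$. Since the gap is independent of the oracle selection, $g_k=-\inner{\grad f(x_k)}{v_k}_{x_k}=g(x_k)$, and the first part gives $g_k\ge h_k$.
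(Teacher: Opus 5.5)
Your proof is correct and follows the paper's argument: gap domination via a feasible minimizing direction $v^\star$ to an optimizer plus first-order convexity, and the converse via the sign of the one-sided derivative of $\phi(t)=f(\Exp_x(tv))$ at $t=0$. You are right that only minimality of $\phi$ at $0$ is needed here, not its convexity, and your explicit induction for feasibility of the iterates supplies a step that the paper's proof leaves implicit and effectively handles only in Lemma~\ref{lem:progress}.
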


\begin{proof}
Fix $x\in\X$ and $x^\star\in\Xstar$.  Weak minimizing-geodesic convexity gives a direction $v^\star\in\Dset(x)$ such that $\Exp_x(v^\star)=x^\star$.  For any $v\in\mathcal S(x)$, oracle optimality and \eqref{eq:first-order-convexity} yield
\[
 \inner{\grad f(x)}{v}_x
 \leq \inner{\grad f(x)}{v^\star}_x
 \leq f^\star-f(x)=-h(x).
\]
Since $g(x)=-\inner{\grad f(x)}{v}_x$, this proves \eqref{eq:gap-dominates}.

Now suppose that $x\in\Xstar$.  For any $v\in\Dset(x)$, the function $\phi(t):=f(\Exp_x(tv))$ is convex on $[0,1]$ and attains its minimum at $t=0$.  Therefore
\[
 \inner{\grad f(x)}{v}_x=\phi'(0)\geq0.
\]
Because $0\in\Dset(x)$, the minimum oracle value is zero, and hence $g(x)=0$.  Conversely, if $g(x)=0$, then \eqref{eq:gap-dominates} gives $0\leq h(x)\leq g(x)=0$, so $x\in\Xstar$.

\end{proof}

The quadratic upper model yields the following one-step estimate.
\begin{lemma}
\label{lem:progress}
Suppose Assumption~\ref{ass:regularity} holds.  Fix $x\in\X$ and choose $v\in\mathcal S(x)$.  Set
\[
g:=-\inner{\grad f(x)}{v}_x,
\qquad
\ell:=\norm{v}_x.
\]
Then, for every $\eta\in[0,1]$, the point $x^+:=\Exp_x(\eta v)$ belongs to $\X$ and satisfies
\begin{equation}
 h(x^+)\leq h(x)-\eta g+\frac{L}{2}\eta^2\ell^2.
 \label{eq:generic-progress}
\end{equation}
Specifically, let
$
\ell_k:=\norm{v_k}_{x_k}.
$ Consequently, at every nonterminal iteration of
Algorithm~\ref{alg:ol-rfw},

\begin{align}
 h_{k+1}
 &\leq h_k-\eta_k g_k+\frac{L}{2}\eta_k^2\ell_k^2
 \label{eq:progress}\\
 &\leq (1-\eta_k)h_k+\frac{L}{2}\eta_k^2D^2.
 \label{eq:baseline-recurrence}
\end{align}
\end{lemma}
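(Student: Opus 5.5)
The argument has three parts: feasibility of $x^+$, the one-step bound \eqref{eq:generic-progress} from the quadratic upper model, and specialization to the iterates followed by Lemma~\ref{lem:gap-domination}.

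\emph{Feasibility.} Since $v\in\mathcal S(x)\subseteq\Dset(x)$, the definition \eqref{eq:feasible-directions} gives $\Exp_x(tv)\in\X$ for every $t\in[0,1]$. In particular $x^+=\Exp_x(\eta v)\in\X$ for $\eta\in[0,1]$. By induction from $x_0\in\X$, and because $\eta_k=a/(k+a)\in(0,1]$ for $a\geq2$ and $k\geq0$, every iterate of Algorithm~\ref{alg:ol-rfw} lies in $\X$. This is the feasibility claim already used in Lemma~\ref{lem:gap-domination}.

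\emph{One-step bound.} Apply \eqref{eq:gsmooth} with $t=\eta$ and the admissible direction $v$, then subtract $f^\star$ from both sides. Using $\inner{\grad f(x)}{v}_x=-g$ and $\norm{v}_x=\ell$, this gives
\[
h(x^+)\leq h(x)-\eta g+\tfrac{L}{2}\eta^2\ell^2,
\]
which is \eqref{eq:generic-progress}.

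\emph{Iterates.} At a nonterminal iteration, $v_k\in\mathcal S(x_k)$, $g_k$ is the associated gap, and $\eta_k\in[0,1]$. Substituting $x=x_k$, $v=v_k$, and $\eta=\eta_k$ into \eqref{eq:generic-progress} gives \eqref{eq:progress}. For \eqref{eq:baseline-recurrence}, use $g_k\geq h_k$ from Lemma~\ref{lem:gap-domination} together with $\eta_k\geq0$ to get $-\eta_k g_k\leq-\eta_k h_k$. Then use $\ell_k\leq\bar\ell(x_k)\leq D$ from Lemma~\ref{lem:direction-compact} to bound $\ell_k^2\leq D^2$.

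The argument has no real obstacle. The only care needed is to check that $\eta_k\leq1$ for all $k\geq0$, which holds because $a/(k+a)\leq1$, so that Assumption~\ref{ass:regularity}, which is stated only for $t\in[0,1]$, applies.
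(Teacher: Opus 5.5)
Your proposal is correct and follows the paper's proof essentially step for step. Feasibility comes from the definition of $\Dset(x)$, and \eqref{eq:generic-progress} from \eqref{eq:gsmooth} with $t=\eta$. The two recurrences come from specializing to $(x_k,v_k,\eta_k)$ and using $g_k\geq h_k$ and $\ell_k\leq D$. Your explicit induction for feasibility of the iterates and your check that $\eta_k\in(0,1]$ are details the paper leaves implicit.
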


\begin{proof}
Since $v\in\Dset(x)$ and $\eta\in[0,1]$, we have
$x^+=\Exp_x(\eta v)\in\X$.  Applying \eqref{eq:gsmooth} with $t=\eta$ and substituting the definitions of
$g$ and $\ell$ yields \eqref{eq:generic-progress}.  The first recurrence follows by taking
$(x,v,\eta)=(x_k,v_k,\eta_k)$, and the second follows from
$g_k\geq h_k$ and $\ell_k\leq D$.
\end{proof}

Now we can derive the standard global rate.
\begin{theorem}[Global $O(1/k)$ rate]
\label{thm:baseline}
Let $a\geq2$ and suppose Assumption~\ref{ass:regularity} holds.  Every generated iterate $x_k$ with $k\geq1$ satisfies
\begin{equation}
  h_k\leq\frac{a^2LD^2}{2(k+a-1)}.
  \label{eq:baseline-rate}
\end{equation}
If Algorithm~\ref{alg:ol-rfw} terminates, its terminal point is an optimizer.
\end{theorem}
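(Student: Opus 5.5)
The plan is to run a direct induction on the baseline recurrence \eqref{eq:baseline-recurrence} from Lemma~\ref{lem:progress}. Set $C:=\tfrac{a^2LD^2}{2}$; the claim is $h_k\le C/(k+a-1)$ for every generated $x_k$ with $k\ge1$. The termination statement needs no new work: if the algorithm returns $x_k$, then $g_k=0$, and Lemma~\ref{lem:gap-domination} says $g(x)=0$ exactly when $x\in\Xstar$.

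\emph{Base case $k=1$.} Apply \eqref{eq:baseline-recurrence} at $k=0$. Since $\eta_0=a/a=1$, we get
\[
h_1\le \tfrac{L}{2}D^2.
\]
The target bound at $k=1$ is $C/a=\tfrac{aLD^2}{2}$, and this is at least $\tfrac{LD^2}{2}$ because $a\ge2\ge1$. The point of this step is that it removes any dependence on $h_0$.

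\emph{Inductive step.} Assume $h_k\le C/(k+a-1)$ for some $k\ge1$ and that $x_{k+1}$ is generated. Write $m:=k+a$, so $\eta_k=a/m$ and $1-\eta_k=(m-a)/m$. Then \eqref{eq:baseline-recurrence} gives
\[
h_{k+1}\le \frac{m-a}{m}\cdot\frac{C}{m-1}+\frac{C}{m^2},
\]
where we used $\tfrac{L}{2}\eta_k^2D^2=C/m^2$. It suffices to show the right-hand side is at most $C/m$. Multiplying through by $m^2(m-1)/C$, this reduces to
\[
m(m-a)+(m-1)\le m(m-1),
\]
which is equivalent to $m(1-a)\le 1-m$, i.e.\ $(a-2)m+1\ge... $; more cleanly, it rearranges to $(m-1)(2-a)\le\dots$. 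The honest computation is
\[
m(m-1)-m(m-a)-(m-1)=m(a-1)-(m-1)=m(a-2)+1>0,
\]
valid for $a\ge2$. This closes the induction.

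The only step that needs care is this algebraic comparison. One must confirm that the worst case $a=2$ still leaves slack, and it does: the slack is $+1$. One must also check that $1-\eta_k\ge0$, which holds since $a\le k+a$. No other obstacle is expected. Nonterminal iterations are exactly those where \eqref{eq:baseline-recurrence} applies, so the induction covers every generated iterate.
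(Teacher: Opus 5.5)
Your proposal is correct and follows essentially the same route as the paper. It uses the same base case $h_1\le LD^2/2\le aLD^2/2$ obtained from $\eta_0=1$, the same induction on \eqref{eq:baseline-recurrence} closed by the slack $(a-2)(k+a)+1\ge0$, and the same appeal to Lemma~\ref{lem:gap-domination} for termination. You should delete the garbled intermediate lines ending in ``$\dots$'', since your final computation $m(a-2)+1>0$ already completes the step.
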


\begin{proof}
Set $Q:=a^2LD^2/2$.  If $x_1$ is generated, then $\eta_0=1$ and \eqref{eq:baseline-recurrence} gives
$
h_1\leq\frac{LD^2}{2}\leq\frac{Q}{a}.
$

Suppose that $x_{k+1}$ is generated and that
$h_k\leq Q/(k+a-1)$ for some $k\geq1$.  Since
$\eta_k=a/(k+a)$, \eqref{eq:baseline-recurrence} yields
\[
 \frac{h_{k+1}}{Q}
 \leq \frac{k}{(k+a)(k+a-1)}
      +\frac{1}{(k+a)^2}
 \leq \frac{1}{k+a},
\]
where the last inequality follows from
$
(a-2)(k+a)+1\geq0.
$
Thus \eqref{eq:baseline-rate} follows by induction.
If the algorithm terminates, its terminal point is optimal by Lemma~\ref{lem:gap-domination}.
\end{proof}

\section{Fast rates under error bounds and length-normalized directional scaling}
\label{sec:acceleration}
\subsection{Local conditions and finite entry}
\label{subsec:local-conditions}
Define the distance to the solution set by
\[
  r(x):=d(x,\Xstar)=\min_{y\in\Xstar}d(x,y).
\]

\begin{assumption}[Local H\"olderian error bound]
\label{ass:heb}
There exist $c>0$, $\theta\in(0,1/2]$, and $R_H>0$ such that
\begin{equation}
  r(x)\leq c\,h(x)^\theta,
  \qquad x\in\X\ \text{with}\ r(x)\leq R_H.
  \label{eq:heb}
\end{equation}
\end{assumption}

The Euclidean analysis of \citet{wirth2023acceleration} uses the normalized scaling inequality
\begin{equation}
 \frac{\langle\nabla f(x),x-p\rangle}{\|x-p\|}
 \geq
 \phi\frac{\langle\nabla f(x),x-x^\star\rangle}
               {\|x-x^\star\|},
 \label{eq:euclidean-scaling}
\end{equation}
where $p$ is an exact LMO solution.  In the strict-interior setting, the underlying geometric estimate goes back to \citet{guelat1986comments}.
For a nearest optimizer $x^\star$ and an admissible minimizing direction $v^\star$ from $x$ to $x^\star$, geodesic convexity gives $-\langle\grad f(x),v^\star\rangle_x\geq h(x)$.  This motivates comparing the oracle decrease per unit direction length with $h(x)/r(x)$.  Because exact oracle solutions need not have the same length, the condition below is stated uniformly over the oracle solution set.

\begin{definition}[Local length-normalized directional scaling]
\label{def:scaling}
Problem~\eqref{eq:problem} satisfies the $(\sigma,R_S)$ length-normalized directional scaling condition if there are $\sigma,R_S>0$ such that, whenever $x\notin\Xstar$ and $r(x)\leq R_S$,
\begin{equation}
  \frac{g(x)}{\bar\ell(x)}
  \geq \sigma\frac{h(x)}{r(x)}.
  \label{eq:scaling}
\end{equation}
\end{definition}

Because all exact oracle solutions have the same gap, $g(x)/\bar\ell(x)$ is their smallest gap-to-length ratio.
If $x\notin\Xstar$, Lemma~\ref{lem:gap-domination} gives $g(x)\geq h(x)>0$; hence every $v\in\mathcal S(x)$ is nonzero and $0<\norm{v}_x\leq\bar\ell(x)\leq D$.  So \eqref{eq:scaling} is equivalent to the corresponding inequality with $\bar\ell(x)$ replaced by $\norm{v}_x$ for every $v\in\mathcal S(x)$.

On the common local region $r(x)\leq\min\{R_H,R_S\}$, Assumption~\ref{ass:heb} and Definition~\ref{def:scaling} give, for every exact oracle solution,
\begin{equation}
 \frac{g(x)}{\ell(x)}
 \geq \frac{g(x)}{\bar\ell(x)}
 \geq \sigma\frac{h(x)}{r(x)}
 \geq \frac{\sigma}{c}h(x)^{1-\theta}.
 \label{eq:heb-scaling-chain}
\end{equation}
Inequality~\eqref{eq:heb-scaling-chain} links the objective to the selected oracle direction, whereas the set-scaling condition of \citet{scieur2026strongly} is a geometric property of the feasible set.

It remains to show that the local conditions used in \eqref{eq:heb-scaling-chain} eventually apply along the iterates.  The global baseline rate yields the following finite-entry property.

\begin{lemma}
\label{lem:entry}
Let $a\geq2$, suppose Assumptions~\ref{ass:regularity} and \ref{ass:heb} hold, and let $R_S>0$.  Define
\begin{equation}
 \bar R:=\min\{R_H,R_S\}.
 \label{eq:local-radius}
\end{equation}
If $\{x\in\X:r(x)\geq\bar R\}$ is nonempty, let
\begin{equation}
 \delta_{\bar R}:=
 \min_{\{x\in\X:r(x)\geq\bar R\}} h(x)>0.
 \label{eq:local-separation}
\end{equation}
Then any integer $S\geq1$ satisfying
\begin{equation}
 \frac{a^2LD^2}{2(S+a-1)}<\delta_{\bar R}
 \label{eq:local-entry-condition}
\end{equation}
has the property that $r(x_k)<\bar R$ for every generated iterate $k\geq S$.  If $\{x\in\X:r(x)\geq\bar R\}$ is empty, the conclusion holds with $S=1$.
\end{lemma}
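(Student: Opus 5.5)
The proof has two parts: first show that $\delta_{\bar R}$ is a well-defined positive minimum, then combine it with the global rate of Theorem~\ref{thm:baseline} by contradiction.

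\textbf{Step 1: the set $K$ is compact and $\delta_{\bar R}>0$.} Let $K:=\{x\in\X:r(x)\geq\bar R\}$ and assume it is nonempty. The map $r(x)=d(x,\Xstar)$ is $1$-Lipschitz in $x$. The set $\Xstar$ is nonempty and closed, since it is a sublevel set of the continuous $f$ on the compact set $\X$. Hence $\Xstar$ is compact and the minimum defining $r$ is attained. It follows that $K$ is a closed subset of the compact set $\X$, and so $K$ is compact. The function $h$ is continuous, so it attains its minimum on $K$ at some point $\hat x$. If $h(\hat x)=0$, then $\hat x\in\Xstar$, so $r(\hat x)=0<\bar R$, which contradicts $\hat x\in K$. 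Therefore $\delta_{\bar R}=h(\hat x)>0$.

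\textbf{Step 2: iterates with $k\ge S$ lie in the local region.} Take any generated iterate $x_k$ with $k\geq S\geq1$. By Theorem~\ref{thm:baseline}, and because $k\mapsto a^2LD^2/(2(k+a-1))$ is decreasing,
\[
h_k\leq\frac{a^2LD^2}{2(k+a-1)}\leq\frac{a^2LD^2}{2(S+a-1)}<\delta_{\bar R}.
\]
Every iterate lies in $\X$ by Lemma~\ref{lem:gap-domination}. If $r(x_k)\geq\bar R$, then $x_k\in K$, which would give $h_k\geq\delta_{\bar R}$, a contradiction. Hence $r(x_k)<\bar R$.

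\textbf{Empty case and termination.} If $K$ is empty, every $x\in\X$ satisfies $r(x)<\bar R$, so the conclusion holds trivially with $S=1$. If the algorithm terminates early, the statement concerns only generated iterates, so no further argument is needed.

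The only point needing care is Step 1: the positivity of $\delta_{\bar R}$ depends on attainment of the minimum over the compact set $K$ together with $h^{-1}(0)=\Xstar$. The remaining steps are routine.
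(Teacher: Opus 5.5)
Your proposal is correct and follows the paper's proof essentially verbatim. Compactness of $\{x\in\X: r(x)\geq\bar R\}$, together with the fact that $h$ vanishes on $\X$ exactly at points of $\Xstar$, gives $\delta_{\bar R}>0$; the decreasing baseline bound of Theorem~\ref{thm:baseline}, valid since $k\geq S\geq1$, then rules out $r(x_k)\geq\bar R$. Your extra details (continuity of $r$, attainment of the minimum, and the empty and terminating cases) only make explicit what the paper leaves implicit.
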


\begin{proof}
Since $r$ is continuous and $\X$ is compact, the set $\{x\in\X:r(x)\geq\bar R\}$ is compact.  Because $\bar R>0$, it contains no optimizer. So $h$ is strictly positive on this set and $\delta_{\bar R}>0$.

For every $k\geq S$, Theorem~\ref{thm:baseline} and \eqref{eq:local-entry-condition} give
\[
 h_k
 \leq \frac{a^2LD^2}{2(k+a-1)}
 \leq \frac{a^2LD^2}{2(S+a-1)}
 <\delta_{\bar R}.
\]
By the definition of $\delta_{\bar R}$, this excludes $r(x_k)\geq\bar R$.  Hence $r(x_k)<\bar R$.
\end{proof}

The entry index $S$ in Lemma~\ref{lem:entry} depends on the separation constant $\delta_{\bar R}$ and is generally not explicit.  Combining the error bound with Theorem~\ref{thm:baseline} yields an explicit entry index.

\begin{corollary}[Explicit entry under a global error bound]
\label{cor:global-entry}
Let $a\geq2$, suppose Assumption~\ref{ass:regularity} holds, and assume that $r(x)\leq c h(x)^\theta$ for all $x\in\X$.
If $S\geq1$ satisfies
\begin{equation}
 c\left(\frac{a^2LD^2}{2(S+a-1)}\right)^\theta\leq R_S,
 \label{eq:global-entry-condition}
\end{equation}
then $r(x_k)\leq R_S$ for every generated iterate $k\geq S$.
In particular, one may take
\[
 S=
 \max\left\{
 1,\,
 \left\lceil
 \frac{a^2LD^2}{2}
 \left(\frac{c}{R_S}\right)^{1/\theta}
 -a+1
 \right\rceil
 \right\}.
\]
\end{corollary}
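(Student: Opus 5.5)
The plan is to combine the global error bound directly with the baseline rate of Theorem~\ref{thm:baseline}, using monotonicity in $k$. Since we assume $r(x)\leq c\,h(x)^\theta$ on all of $\X$, no separation constant $\delta_{\bar R}$ as in Lemma~\ref{lem:entry} is needed. For every generated iterate with $k\geq S\geq1$, Theorem~\ref{thm:baseline} gives
\[
 h_k\leq \frac{a^2LD^2}{2(k+a-1)}\leq \frac{a^2LD^2}{2(S+a-1)},
\]
because $k\mapsto 1/(k+a-1)$ is decreasing. Since $\theta>0$, the map $t\mapsto c\,t^\theta$ is nondecreasing on $[0,\infty)$, and $h_k\geq0$. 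Therefore
\[
 r(x_k)\leq c\,h_k^\theta\leq c\left(\frac{a^2LD^2}{2(S+a-1)}\right)^\theta\leq R_S
\]
by \eqref{eq:global-entry-condition}. This proves the first claim.

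For the explicit choice of $S$, the key step is to invert \eqref{eq:global-entry-condition}. Raising both sides to the power $1/\theta>0$ preserves the order, so \eqref{eq:global-entry-condition} is equivalent to
\[
 \frac{a^2LD^2}{2(S+a-1)}\leq\left(\frac{R_S}{c}\right)^{1/\theta},
\]
that is, to
\[
 S\geq \frac{a^2LD^2}{2}\left(\frac{c}{R_S}\right)^{1/\theta}-a+1 .
\]
The stated $S$ is an integer, is at least $1$, and is at least the right-hand side (it is the ceiling of it, or larger). Hence it satisfies \eqref{eq:global-entry-condition}, and the first part applies.

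There is no real obstacle in this argument. The only points to check are that the baseline bound requires $k\geq1$, which is guaranteed by $S\geq1$, and that the equivalence uses the positivity of $\theta$, $c$, and $R_S$. If the algorithm terminates earlier, the claim concerns only generated iterates, so it holds vacuously beyond termination.
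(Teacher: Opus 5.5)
Your proof is correct and follows the paper's intended argument. The paper gives no separate proof, only the remark that combining the global error bound with Theorem~\ref{thm:baseline} yields the explicit entry index, and that is what you do: use monotonicity of $k\mapsto 1/(k+a-1)$ and of $t\mapsto c\,t^\theta$, then invert \eqref{eq:global-entry-condition} using $S\geq1$ and the positivity of $\theta$, $c$, and $R_S$.
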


\subsection{Fast rate for the open-loop schedule}
\label{subsec:heb-open-loop}
The fast-rate analysis uses the scalar recurrence lemma in Appendix~\ref{app:sequence}, which gives an explicit bound without assuming that the suboptimality sequence is monotone.
\begin{theorem}[Fast open-loop rate]
\label{thm:abstract-acceleration}
Let $a>2$.  Suppose Assumptions~\ref{ass:regularity} and \ref{ass:heb} hold, and suppose problem~\eqref{eq:problem} satisfies the $(\sigma,R_S)$ scaling condition in Definition~\ref{def:scaling}.  Let $S\geq1$ be an entry index supplied by Lemma~\ref{lem:entry}, then the Algorithm~\ref{alg:ol-rfw} either terminates at an optimizer or
\begin{equation}
  h_k=O\left(k^{-1/(1-\theta)}\right).
  \label{eq:accelerated-rate}
\end{equation}
More precisely, for every $k\geq S$,
\begin{equation}
\begin{split}
 h_k\leq\max\Bigg\{
 \left(\frac{\eta_{k-2}}{\eta_{S-1}}\right)^{\!1/(1-\theta)}h_S,
 \left(\eta_{k-2}\frac{acLD}{2(a-2)\sigma}\right)^{\!1/(1-\theta)}
 +\frac{LD^2}{2}\eta_{k-2}^2
 \Bigg\},
\end{split}
\end{equation}
where $\eta_j=a/(j+a)$ for the auxiliary indices $j\geq-1$.
\end{theorem}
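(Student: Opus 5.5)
The plan is to combine the one-step inequality \eqref{eq:progress} with the local chain \eqref{eq:heb-scaling-chain} into a scalar recurrence, and then apply the sequence lemma of Appendix~\ref{app:sequence}. For $k\geq S$, Lemma~\ref{lem:entry} gives $r(x_k)<\bar R$, so \eqref{eq:heb-scaling-chain} holds with $\ell_k=\norm{v_k}_{x_k}$ for the selected oracle solution:
\[
 g_k\geq \frac{\sigma}{c}\,\ell_k\,h_k^{1-\theta}.
\]
Termination is handled separately: if $g_k=0$, Lemma~\ref{lem:gap-domination} shows that $x_k$ is optimal. So we may assume $g_k>0$ throughout.

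\textbf{Step 1: eliminate $\ell_k$.} Insert the gap bound into \eqref{eq:progress}. Writing $\ell=\ell_k$, and splitting $\eta_k g_k$ as a convex combination (half from $g_k\geq h_k$, half from the scaling bound), one obtains
\[
 h_{k+1}\leq h_k-\tfrac{\eta_k}{2}h_k-\tfrac{\eta_k}{2}\tfrac{\sigma}{c}\ell h_k^{1-\theta}+\tfrac{L}{2}\eta_k^2\ell^2 .
\]
More precisely, I would use the weight $(1-2/a)$ and its complement so that the contraction factor becomes $(1-2\eta_k/a)$-like, which is where $a>2$ enters. Maximizing the remaining quadratic in $\ell\in[0,D]$ then yields a recurrence of the form
\[
 h_{k+1}\leq\Bigl(1-\tfrac{2\eta_k}{a}\Bigr)h_k+\eta_k\cdot(\text{term}) .
\]
There are two regimes. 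If $h_k^{1-\theta}$ is large relative to $\eta_k\,acLD/(2(a-2)\sigma)$, the negative linear-in-$\ell$ term dominates the quadratic for all admissible $\ell$, and we get $h_{k+1}\leq(1-\tfrac{2}{a}\eta_k)h_k$ up to constants. Otherwise $h_k\leq(\eta_k\,acLD/(2(a-2)\sigma))^{1/(1-\theta)}$, and the crude bound \eqref{eq:baseline-recurrence} gives $h_{k+1}\leq h_k+\tfrac{LD^2}{2}\eta_k^2$. This dichotomy matches exactly the second term in the claimed max.

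\textbf{Step 2: the sequence lemma.} The appendix lemma, which I assume treats recurrences of the form ``either $h_{k+1}\leq(1-\beta\eta_k)h_k$ or $h_k\leq(C\eta_k)^{1/(1-\theta)}$ and $h_{k+1}\leq h_k+B\eta_k^2$'', should propagate the bound
\[
 h_k\leq\max\Bigl\{(\eta_{k-2}/\eta_{S-1})^{1/(1-\theta)}h_S,\;(C\eta_{k-2})^{1/(1-\theta)}+B\eta_{k-2}^2\Bigr\}
\]
by induction from $k=S$. The shifted index $k-2$ absorbs the one-step lag in the second regime, and also the ratio $\eta_{k-1}/\eta_{k}$. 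The key check in the contraction case is
\[
 \Bigl(1-\tfrac{2\eta_k}{a}\Bigr)\leq\Bigl(\tfrac{\eta_{k-1}}{\eta_{k-2}}\Bigr)^{1/(1-\theta)},
\]
which holds because $1/(1-\theta)\leq2$ and $(1-\tfrac{2}{k+a})\leq\bigl(\tfrac{k+a-2}{k+a-1}\bigr)^2$. The $O(k^{-1/(1-\theta)})$ conclusion then follows since $\eta_{k-2}\sim a/k$ and $\eta^2\leq\eta^{1/(1-\theta)}$.

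\textbf{Main obstacle.} I expect the hardest part to be the constants in Step 1: choosing the split of $\eta_k g_k$ so that the threshold constant comes out exactly as $acLD/(2(a-2)\sigma)$ while keeping a contraction factor compatible with the exponent $1/(1-\theta)$. The first thing I would try is to write
\[
 \eta_k g_k\geq \tfrac{2}{a}\eta_k h_k+\bigl(1-\tfrac{2}{a}\bigr)\eta_k\tfrac{\sigma}{c}\ell_k h_k^{1-\theta},
\]
then require
\[
 \bigl(1-\tfrac{2}{a}\bigr)\tfrac{\sigma}{c}h_k^{1-\theta}\geq\tfrac{L}{2}\eta_k\ell_k .
\]
Using $\ell_k\leq D$, this condition becomes precisely $h_k^{1-\theta}\geq\eta_k\,acLD/(2(a-2)\sigma)$, which gives the stated constant.
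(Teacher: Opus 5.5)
\textbf{Verdict: Step~1 is correct, but Step~2 has a genuine gap.} The inequality you use to propagate the bound by induction is false, so the stated bound cannot serve as the induction hypothesis.

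Your Step~1 coincides with the paper's reduction. The split $g_k=\tfrac2a g_k+(1-\tfrac2a)g_k$, together with $\ell_k^2\le D\ell_k$, gives
\[
 h_{k+1}\le\Bigl(1-\tfrac{2}{k+a}\Bigr)h_k-\eta_kA\ell_kh_k^{1-\theta}+\eta_k^2B\ell_k,
 \qquad A=\tfrac{(a-2)\sigma}{ac},\quad B=\tfrac{LD}{2}.
\]
From there the paper simply applies Lemma~\ref{lem:sequence} with $\lambda=2/a$, $C_k=\ell_k$, $C=D$.

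The problem is in Step~2, where you try to propagate the \emph{final} bound itself by induction. Your key check $1-\tfrac{2}{k+a}\le\bigl(\tfrac{k+a-2}{k+a-1}\bigr)^2$ fails for every $k$. Since $1-\tfrac{2}{k+a}=\tfrac{k+a-2}{k+a}$, it is equivalent to $(k+a-1)^2\le(k+a)(k+a-2)=(k+a-1)^2-1$; for instance, $k+a=4$ gives $\tfrac12>\tfrac49$. Put differently, $\eta_{k-1}^2<\eta_k\eta_{k-2}$ because $j\mapsto\eta_j$ is strictly log-convex. Consequently, for $\theta=1/2$ (exactly the $O(k^{-2})$ case), the contraction $h_{k+1}\le(1-\tfrac{2}{k+a})h_k$ does not carry $(\eta_{k-2}/\eta_{S-1})^2h_S$ to $(\eta_{k-1}/\eta_{S-1})^2h_S$. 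Nor does it carry $\tfrac{LD^2}{2}\eta_{k-2}^2$ to $\tfrac{LD^2}{2}\eta_{k-1}^2$.

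The missing idea is to induct on a stronger quantity matched to the exact contraction factor $1-\tfrac{2}{k+a}=\eta_k/\eta_{k-2}$. This factor telescopes on products of consecutive step sizes: $\tfrac{\eta_k}{\eta_{k-2}}\,\eta_{k-2}\eta_{k-1}=\eta_{k-1}\eta_k$. With $q=1/(2(1-\theta))\in[1/2,1]$, Lemma~\ref{lem:sequence} proves
\[
 h_k\le\max\Bigl\{\Bigl(\tfrac{\eta_{k-2}\eta_{k-1}}{\eta_{S-2}\eta_{S-1}}\Bigr)^q h_S,\ \Bigl(\eta_{k-2}\eta_{k-1}\tfrac{B^2}{A^2}\Bigr)^q+\eta_{k-2}\eta_{k-1}BD\Bigr\}.
\]
This bound propagates in both regimes:
\begin{itemize}
\item In the contraction regime, $\tfrac{\eta_k}{\eta_{k-2}}(\eta_{k-2}\eta_{k-1})^q\le(\eta_{k-1}\eta_k)^q$ for $q\le1$, with equality for the linear term.
\item In your ``small'' regime, $\eta_k^{1/(1-\theta)}=(\eta_k^2)^q\le(\eta_{k-1}\eta_k)^q$ and $\eta_k^2\le\eta_{k-1}\eta_k$.
\end{itemize}
Only after the induction does one relax to the displayed bound, using $\eta_{k-2}\eta_{k-1}\le\eta_{k-2}^2$ and $\eta_{S-2}\eta_{S-1}\ge\eta_{S-1}^2$. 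That one-index loss at each end is what the shifted indices $\eta_{k-2}$ and $\eta_{S-1}$ actually pay for; they do not come from a per-step ratio.
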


\begin{proof}
Lemma~\ref{lem:entry} places every $x_k$, $k\geq S$, in both local regions.  From \eqref{eq:heb-scaling-chain}, we have
\begin{equation}
  \frac{g_k}{\ell_k}
  \geq\frac{\sigma}{c}h_k^{1-\theta}.
  \label{eq:scaled-gap}
\end{equation}
By Lemma~\ref{lem:gap-domination}, a nonterminal iteration has $h_k>0$, $g_k>0$, and $\ell_k>0$.

Set $\lambda_a:=2/a\in(0,1)$ and split the linear term in \eqref{eq:progress} as $g_k=\lambda_a g_k+(1-\lambda_a)g_k$.  Use $g_k\geq h_k$ on the first part, \eqref{eq:scaled-gap} on the second, and $\ell_k^2\leq D\ell_k$ in the quadratic remainder.  This yields
\begin{equation*}
 h_{k+1}
 \leq(1-\lambda_a\eta_k)h_k
 -\eta_k(1-\lambda_a)\frac{\sigma}{c}\ell_kh_k^{1-\theta}
 +\eta_k^2\frac{LD}{2}\ell_k.
\end{equation*}
Since $\lambda_a a=2$, apply Lemma~\ref{lem:sequence} with
\[
 \lambda=\lambda_a,\qquad
 A=\frac{(a-2)\sigma}{ac},\qquad
 B=\frac{LD}{2},\qquad
 C_k=\ell_k,\qquad C=D.
\]
Its explicit bound is the displayed claim because $B/A=acLD/[2(a-2)\sigma]$ and $BC=LD^2/2$.  Finally, $\eta_{k-2}=a/(k+a-2)=O(1/k)$.
\end{proof}

\begin{corollary}[Quadratic open-loop rate]
\label{cor:theta-half}
Under the assumptions of Theorem~\ref{thm:abstract-acceleration}, if $\theta=1/2$, then
\[
 h_k\leq\eta_{k-2}^2
 \max\left\{
 \frac{h_S}{\eta_{S-1}^2},
 \left(\frac{acLD}{2(a-2)\sigma}\right)^2+\frac{LD^2}{2}
 \right\},
 \qquad k\geq S.
\]
Hence $h_k=O(1/k^2)$.
\end{corollary}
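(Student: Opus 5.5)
This is a direct specialization of Theorem~\ref{thm:abstract-acceleration} to $\theta=1/2$, so no new analysis is needed. I will substitute $\theta=1/2$ into the explicit bound and rearrange. With $1/(1-\theta)=2$, the bound in Theorem~\ref{thm:abstract-acceleration} becomes, for every generated $k\geq S$,
\[
 h_k\leq\max\left\{
 \frac{\eta_{k-2}^2}{\eta_{S-1}^2}h_S,\;
 \eta_{k-2}^2\left(\frac{acLD}{2(a-2)\sigma}\right)^{2}
 +\frac{LD^2}{2}\eta_{k-2}^2
 \right\}.
\]

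Both entries of the maximum carry the common factor $\eta_{k-2}^2\geq0$. I will factor it out using $\max\{tX,tY\}=t\max\{X,Y\}$ for $t\geq0$, which gives exactly the displayed inequality of the corollary. The auxiliary quantities are well defined because $\eta_j=a/(j+a)>0$ for $j\geq-1$ and $a>2$; in particular $\eta_{S-1}>0$, so dividing by it is legitimate.

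For the rate, the maximum is a constant independent of $k$. Since $\eta_{k-2}=a/(k+a-2)\leq a/k$ for $k\geq1$ (because $a>2$), we get $\eta_{k-2}^2\leq a^2/k^2$, hence $h_k=O(1/k^2)$. If the algorithm terminates, Theorem~\ref{thm:abstract-acceleration} already says the terminal point is an optimizer, so the bound only needs to hold for the iterates actually generated.

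There is no substantive obstacle. The one point to check is that $\theta=1/2$ lies in the range $(0,1/2]$ permitted by Assumption~\ref{ass:heb}, so the theorem applies verbatim. The exponents $1/(1-\theta)=2$ apply to each term as written: the first term becomes $(\eta_{k-2}/\eta_{S-1})^2h_S$, while the additive $\frac{LD^2}{2}\eta_{k-2}^2$ sits outside the power.
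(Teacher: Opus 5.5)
Your proposal is correct and matches the paper, which presents this corollary as an immediate specialization of Theorem~\ref{thm:abstract-acceleration}. Setting $\theta=1/2$ gives exponent $2$, factoring $\eta_{k-2}^2$ out of both branches yields the stated bound, and $\eta_{k-2}=a/(k+a-2)\le a/k$ gives $h_k=O(1/k^2)$.
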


\subsection{Comparison with the short step}
\label{subsec:feedback-comparison}

For comparison, we analyze the standard Riemannian short step, obtained by minimizing the quadratic upper model in~\eqref{eq:generic-progress} with $\widehat L\geq L$; see~\citet{pokutta2024short,scieur2026strongly}.
The analysis uses the same local HEB and directional-scaling conditions as Theorem~\ref{thm:abstract-acceleration}.

\begin{proposition}[Rates for the  short step]
\label{prop:feedback}
Suppose Assumption~\ref{ass:regularity} holds, and $\widehat L\geq L$.  Use the oracle, stopping rule, and exponential-map update of Algorithm~\ref{alg:ol-rfw}, but replace its open-loop step size by
\begin{equation}
 \eta_k^{\rm fb}
 :=
 \min\left\{1,\frac{g_k}{\widehat L\ell_k^2}\right\}.
 \label{eq:feedback-step}
\end{equation}
Then, every nonterminal update is feasible and satisfies
\begin{equation}
 h_{k+1}\leq h_k-\min\left\{\frac{h_k}{2},
                 \frac{h_k^2}{2\widehat L D^2}\right\}.
 \label{eq:feedback-global}
\end{equation}
So the algorithm either terminates at an optimizer or $h_k\to0$.
And if Assumption~\ref{ass:heb} and the $(\sigma,R_S)$ scaling condition in Definition~\ref{def:scaling} also hold, set $C_{\rm fb}:=\sigma^2/(2\widehat Lc^2)$.  There is a finite, generally nonexplicit index $S_{\rm fb}$ such that, for every $k\geq S_{\rm fb}$,
\begin{equation}
 h_{k+1}\leq h_k-\min\left\{\frac{h_k}{2},
     C_{\rm fb}h_k^{2(1-\theta)}\right\}.
 \label{eq:feedback-local}
\end{equation}
Moreover,
\begin{align}
 h_k&=O\left(k^{-1/(1-2\theta)}\right),
 &&0<\theta<\tfrac12,
 \label{eq:feedback-polynomial}\\
 h_k&=O\left((1-\min\{1/2,C_{\rm fb}\})^k\right),
 &&\theta=\tfrac12.
 \label{eq:feedback-linear}
\end{align}
\end{proposition}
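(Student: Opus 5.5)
The proof rests on one per-step inequality. I would first derive, from the quadratic model \eqref{eq:generic-progress}, a one-step decrease for the short step, and then feed in either the trivial bounds ($g_k\ge h_k$, $\ell_k\le D$) or the local chain \eqref{eq:heb-scaling-chain}.

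\textbf{Feasibility and the basic decrease.} Since $\eta_k^{\rm fb}\in(0,1]$ at nonterminal iterations, Lemma~\ref{lem:progress} gives feasibility and
\[
 h_{k+1}\le h_k-\eta g_k+\tfrac{L}{2}\eta^2\ell_k^2\le h_k-\eta g_k+\tfrac{\widehat L}{2}\eta^2\ell_k^2 ,\qquad \eta=\eta_k^{\rm fb},
\]
using $L\le\widehat L$. I would then split into two cases.
\begin{itemize}
\item If $g_k\ge\widehat L\ell_k^2$, then $\eta=1$. The right-hand side is at most $h_k-g_k+g_k/2=h_k-g_k/2$.
\item Otherwise $\eta=g_k/(\widehat L\ell_k^2)$ minimizes the quadratic, and the decrease equals $g_k^2/(2\widehat L\ell_k^2)$.
\end{itemize}
So in all cases
\[
 h_{k+1}\le h_k-\min\left\{\frac{g_k}{2},\frac{g_k^2}{2\widehat L\ell_k^2}\right\}.
\]
Using $g_k\ge h_k$ (Lemma~\ref{lem:gap-domination}) and $\ell_k\le D$ gives \eqref{eq:feedback-global}. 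That inequality shows $h_k$ is nonincreasing and nonnegative, hence convergent. If the limit were $\varepsilon>0$, each step would decrease $h_k$ by at least $\min\{\varepsilon/2,\varepsilon^2/(2\widehat LD^2)\}$, which is impossible; so $h_k\to0$. Termination at an optimizer follows from Lemma~\ref{lem:gap-domination}.

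\textbf{Local regime.} Because $h_k\to0$ monotonically, I would redo the argument of Lemma~\ref{lem:entry} with this convergence in place of Theorem~\ref{thm:baseline}. Take the separation constant $\delta_{\bar R}>0$ from \eqref{eq:local-separation}; there is a finite $S_{\rm fb}$ with $h_k<\delta_{\bar R}$ for all $k\ge S_{\rm fb}$, hence $r(x_k)<\bar R$. This index is not explicit, since $\delta_{\bar R}$ is not. On this region, \eqref{eq:heb-scaling-chain} gives
\[
 \frac{g_k}{\ell_k}\ge\frac{\sigma}{c}h_k^{1-\theta},\qquad\text{so}\qquad \frac{g_k^2}{\ell_k^2}\ge\frac{\sigma^2}{c^2}h_k^{2(1-\theta)}.
\]
Combined with $g_k/2\ge h_k/2$, this yields \eqref{eq:feedback-local} with $C_{\rm fb}=\sigma^2/(2\widehat Lc^2)$. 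The key point is that the ratio $g_k/\ell_k$ enters squared, so the quadratic model never needs a separate bound on $\ell_k$.

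\textbf{Rates from \eqref{eq:feedback-local}.} If $\theta=1/2$, then $h_k^{2(1-\theta)}=h_k$, so $h_{k+1}\le(1-\min\{1/2,C_{\rm fb}\})h_k$ for $k\ge S_{\rm fb}$. This gives \eqref{eq:feedback-linear}, with the finitely many earlier terms absorbed into the constant.

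If $\theta<1/2$, set $p:=2(1-\theta)>1$. Since $h_k\to0$, after possibly enlarging $S_{\rm fb}$ we have $C_{\rm fb}h_k^{p-1}\le1/2$, and then $h_{k+1}\le h_k-C_{\rm fb}h_k^{p}$. The standard argument applies to this recursion. If $h_{k+1}\ge h_k/2$, the mean value theorem on $t\mapsto t^{-(p-1)}$ gives
\[
 h_{k+1}^{-(p-1)}-h_k^{-(p-1)}\ge (p-1)C_{\rm fb}\,\frac{h_k^{p}}{h_k^{p}}\,2^{-p}\text{-type constant}>0.
\]
If instead $h_{k+1}<h_k/2$, then $h_{k+1}^{-(p-1)}\ge 2^{p-1}h_k^{-(p-1)}$, and the increment is still bounded below once $h_k$ is small. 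Summing the increments, $h_k^{-(p-1)}$ grows at least linearly in $k$, so
\[
 h_k=O\bigl(k^{-1/(p-1)}\bigr)=O\bigl(k^{-1/(1-2\theta)}\bigr).
\]

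\textbf{Main obstacle.} The main difficulty is bookkeeping in this final recursion step, namely handling both branches of the minimum cleanly. Everything else reduces directly to Lemmas~\ref{lem:gap-domination}, \ref{lem:progress} and \ref{lem:entry}.
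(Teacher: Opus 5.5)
Your proposal is correct and follows the paper's proof essentially step for step. It uses the same two-case decrease from the quadratic model, the same monotone-limit argument for $h_k\to0$, the same entry into the local region via the separation constant $\delta_{\bar R}$, and the same squaring of $g_k/\ell_k\ge(\sigma/c)h_k^{1-\theta}$ to obtain $C_{\rm fb}$. The only difference is cosmetic and comes in the final step for $\theta<1/2$: the paper applies $(1-z)^{-q}\ge1+qz$ to $h_{k+1}\le h_k(1-z_k)$. Your mean-value argument also works and needs no case split, since $\xi\le h_k$ directly gives $h_{k+1}^{-q}-h_k^{-q}\ge qC_{\rm fb}$ with $q=1-2\theta$, with no $2^{-p}$ factor.
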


\begin{proof}
Since $\eta_k^{\rm fb}\in(0,1]$ and $v_k\in\Dset(x_k)$, every update remains in $\X$.  If \eqref{eq:feedback-step} selects the full step, then $g_k\geq\widehat L\ell_k^2$, and \eqref{eq:generic-progress} with $L\leq\widehat L$ gives
\[
 h_{k+1}\leq h_k-g_k+\frac{\widehat L}{2}\ell_k^2
 \leq h_k-\frac{g_k}{2}\leq h_k-\frac{h_k}{2}.
\]
For a short step, the same model gives
\[
 h_{k+1}\leq h_k-\frac{g_k^2}{2\widehat L\ell_k^2}
 \leq h_k-\frac{h_k^2}{2\widehat L D^2}.
\]
Then we obtain \eqref{eq:feedback-global}.  If the method does not terminate, $\{h_k\}$ is nonincreasing and converges to some $\bar h\geq0$.
If $\bar h>0$, then every iteration decreases $h_k$ by at least
$
\min\left\{\frac{\bar h}{2}, \frac{\bar h^2}{2\widehat L D^2}\right\}>0,
$
which contradicts $h_k\geq0$.  So $h_k\to0$.

Now set $\bar R:=\min\{R_H,R_S\}$.
If $E_{\bar R}:=\{x\in\X:r(x)\geq\bar R\}$ is nonempty, compactness and
$E_{\bar R}\cap\Xstar=\varnothing$ give
$\delta_{\bar R}:=\min_{x\in E_{\bar R}}h(x)>0$.  Since $h_k\to0$, all
sufficiently late iterates satisfy $h_k<\delta_{\bar R}$ and hence
$r(x_k)<\bar R$. The same conclusion is immediate if $E_{\bar R}$ is empty.
We define a finite index $S_{\rm fb}$.
For every $k\geq S_{\rm fb}$,
\[
 \frac{g_k}{\ell_k}
 \geq\frac{g_k}{\bar\ell(x_k)}
 \geq\frac{\sigma}{c}h_k^{1-\theta}.
\]
In the full-step case, the estimate above gives
\[
 h_{k+1}\leq h_k-\frac{h_k}{2}.
\]
In the short-step case,
\[
 \frac{g_k^2}{2\widehat L\ell_k^2}
 =\frac{1}{2\widehat L}
   \left(\frac{g_k}{\ell_k}\right)^2
 \geq
 \frac{\sigma^2}{2\widehat Lc^2}
 h_k^{2(1-\theta)}
 =C_{\rm fb}h_k^{2(1-\theta)}.
\]
Combining the two cases proves \eqref{eq:feedback-local}.

Lemma~\ref{lem:gap-domination} gives $h_k>0$ at every nonterminal iteration.
For $0<\theta<1/2$, set $q:=1-2\theta>0$.  Since $h_k\to0$, choose $K\geq S_{\rm fb}$ such that $z_k:=C_{\rm fb}h_k^q\leq1/2$ for every $k\geq K$.
Now the second term in the minimum in \eqref{eq:feedback-local} is the smaller one, so
\[
 h_{k+1}\leq h_k(1-z_k).
\]
By convexity, $(1-z)^{-q}\geq1+qz$ for $z\in[0,1)$, Consequently,
\[
 h_{k+1}^{-q}
 \geq h_k^{-q}(1-z_k)^{-q}
 \geq h_k^{-q}+qC_{\rm fb}.
\]
Summing gives $h_k^{-q}\geq h_K^{-q}+qC_{\rm fb}(k-K)$, namely, $ h_k \le \left( h_K^{-q}+qC_{\rm fb}(k-K) \right)^{-1/q} $, which proves \eqref{eq:feedback-polynomial}.
If $\theta=1/2$, then \eqref{eq:feedback-local} gives, for every $k\geq S_{\rm fb}$,
\[
 h_{k+1}\leq
 \left(1-\min\{1/2,C_{\rm fb}\}\right)h_k,
\]
which proves \eqref{eq:feedback-linear}.
\end{proof}

\section{Geometric conditions for accelerated rates}
\label{sec:geometry}
This section considers two geometric conditions that lead to faster RFW convergence. The first combines a Riemannian set-scaling inequality with a uniform positive lower bound on the gradient norm, with spherical balls as an explicit example. The second derives length-normalized directional scaling from an interior-point condition and applies the result to relative interiors and SPD Loewner intervals.
\subsection{Riemannian set scaling and strong gap growth}
\label{subsec:rsi}
Unlike Definition~\ref{def:scaling}, the following condition depends only on the geometry of $\X$.  For $x\in\X$ and $w\in T_x\M$, define
\begin{equation}
 \mathcal A(x,w):=
 \arg\max_{v\in\Dset(x)}\inner{w}{v}_x.
 \label{eq:support-directions}
\end{equation}

\begin{definition}
\label{def:rsi}
For $\alpha>0$, the set $\X$ satisfies the $\alpha$-Riemannian scaling inequality (RSI) if
\begin{equation}
 \inner{w}{v}_x
 \geq \alpha\norm{w}_x\norm{v}_x^2
 \quad
 \text{for all }x\in\X,\ w\in T_x\M,\
 v\in\mathcal A(x,w).
 \label{eq:rsi}
\end{equation}
\end{definition}
On a Hadamard manifold, Definition~\ref{def:rsi} reduces to the Riemannian scaling inequality of \citet{scieur2026strongly}.  Its admissible-direction form remains meaningful when minimizing geodesics are nonunique.  The next proposition combines this condition with a uniform positive lower bound on $\norm{\grad f(x)}_x$. Such a bound is automatic when the gradient is continuous and nowhere zero on the compact set $\X$.

\begin{proposition}
\label{prop:rsi-growth}
Suppose Assumption~\ref{ass:regularity} holds, $\X$ satisfies Definition~\ref{def:rsi}, and
\begin{equation}
 \nu:=\inf_{x\in\X}\norm{\grad f(x)}_x>0.
 \label{eq:gradient-away-zero}
\end{equation}
Let
\begin{equation}
 M_{\rm RSI}:=\frac{L}{\alpha\nu},
 \label{eq:rsi-growth-constant}
\end{equation}
then for every $x\in\X$, every exact oracle solution $v\in\mathcal S(x)$, and every $\eta\in[0,1]$,
\begin{align}
 g(x)&\geq\alpha\nu\norm{v}_x^2,
 \label{eq:rsi-gap-length}\\
 f(\Exp_x(\eta v))-f(x)
 -\eta\inner{\grad f(x)}{v}_x
 &\leq\frac{M_{\rm RSI}}{2}\eta^2g(x).
 \label{eq:rsi-gap-remainder}
\end{align}
\end{proposition}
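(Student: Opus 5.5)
The plan is to identify the oracle solution set with a support set of $\X$ and then read off both inequalities. The key observation is that minimizing $\inner{\grad f(x)}{v}_x$ over $\Dset(x)$ is the same as maximizing $\inner{w}{v}_x$ with $w:=-\grad f(x)$. Hence $\mathcal S(x)=\mathcal A(x,-\grad f(x))$ for every $x\in\X$. This holds exactly, as sets, so the argument covers every exact oracle solution, including when minimizing geodesics or oracle solutions are nonunique.

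For \eqref{eq:rsi-gap-length}, fix $x\in\X$ and $v\in\mathcal S(x)$. Apply Definition~\ref{def:rsi} with $w=-\grad f(x)$ and this $v\in\mathcal A(x,w)$. Since $\inner{w}{v}_x=-\inner{\grad f(x)}{v}_x=g(x)$, this gives
\[
 g(x)\geq\alpha\norm{\grad f(x)}_x\norm{v}_x^2\geq\alpha\nu\norm{v}_x^2 ,
\]
where the last step uses \eqref{eq:gradient-away-zero} and $\norm{v}_x^2\geq0$. If $\grad f(x)=0$, the RSI would be vacuous, but $\nu>0$ excludes this case anyway.

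For \eqref{eq:rsi-gap-remainder}, I would use the quadratic upper model \eqref{eq:gsmooth} with $t=\eta$. This is legitimate because $v\in\mathcal S(x)\subseteq\Dset(x)$ and $\eta\in[0,1]$. It bounds the left-hand side by $\frac L2\eta^2\norm{v}_x^2$. Substituting $\norm{v}_x^2\leq g(x)/(\alpha\nu)$ from the first part then gives $\frac{L}{2\alpha\nu}\eta^2g(x)=\frac{M_{\rm RSI}}{2}\eta^2g(x)$.

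There is no real obstacle. The only points needing care are the sign convention, namely that the RSI is stated for the support direction of $w$ while the oracle minimizes against the gradient, and checking that the set equality $\mathcal S(x)=\mathcal A(x,-\grad f(x))$ is exact rather than merely an inclusion.
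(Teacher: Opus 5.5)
Your proof is correct and follows the paper's argument essentially step for step. You identify $\mathcal S(x)=\mathcal A(x,-\grad f(x))$, apply the RSI together with $\nu>0$ to get \eqref{eq:rsi-gap-length}, and then combine the quadratic upper model \eqref{eq:gsmooth} at $t=\eta$ with $\norm{v}_x^2\leq g(x)/(\alpha\nu)$ to obtain \eqref{eq:rsi-gap-remainder}. (Only the inclusion $\mathcal S(x)\subseteq\mathcal A(x,-\grad f(x))$ is actually needed, though equality does hold.)
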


\begin{proof}
For $w=-\grad f(x)$, the maximizing set $\mathcal A(x,w)$ coincides with the exact oracle solution set $\mathcal S(x)$.  Definition~\ref{def:rsi} and \eqref{eq:gradient-away-zero} therefore give
\[
 g(x)=\inner{-\grad f(x)}{v}_x
 \geq\alpha\norm{\grad f(x)}_x\norm{v}_x^2
 \geq\alpha\nu\norm{v}_x^2,
\]
which proves \eqref{eq:rsi-gap-length}.
Combined with \eqref{eq:gsmooth}, we have
\[
 f(\Exp_x(\eta v))-f(x)
 -\eta\inner{\grad f(x)}{v}_x
 \leq\frac{L}{2}\eta^2\norm{v}_x^2
 \leq\frac{L}{2\alpha\nu}\eta^2g(x)
 =\frac{M_{\rm RSI}}{2}\eta^2g(x).
\]
\end{proof}

Condition~\eqref{eq:gradient-away-zero} excludes stationary points in $\X$. In particular, an optimizer cannot lie in the ambient interior of $\X$.  Indeed, the interior-ball condition in Assumption~\ref{ass:interior} implies $\grad f(x^\star)=0$.  Thus Theorem~\ref{thm:rsi-open-loop} and the interior-based results below cover complementary optimization regimes.

\begin{theorem}[Open-loop rate under Riemannian set scaling]
\label{thm:rsi-open-loop}
Let $a\geq2$.  Suppose Assumption~\ref{ass:regularity} holds, $\X$ satisfies the $\alpha$-Riemannian scaling inequality, and \eqref{eq:gradient-away-zero} holds.  Let $M_{\rm RSI}$ be defined by \eqref{eq:rsi-growth-constant}, and define the threshold index
\begin{equation}
 K_{\rm RSI}:=
 \max\left\{1,
 \left\lceil\frac{aM_{\rm RSI}}{2}-a\right\rceil
 \right\}.
 \label{eq:rsi-threshold}
\end{equation}
Algorithm~\ref{alg:ol-rfw} either terminates at an optimizer or, for every $k\geq K_{\rm RSI}$,
\begin{align}
 h_k
 &\leq h_{K_{\rm RSI}}
 \exp\left(
   \frac{M_{\rm RSI}a^2}
        {2(K_{\rm RSI}+a-1)}
 \right)
 \left(\frac{K_{\rm RSI}+a}{k+a}\right)^a
 \label{eq:rsi-rate-hK}\\
 &\leq
 \frac{a^2LD^2}{2(K_{\rm RSI}+a-1)}
 \exp\left(
   \frac{La^2}
        {2\alpha\nu(K_{\rm RSI}+a-1)}
 \right)
 \left(\frac{K_{\rm RSI}+a}{k+a}\right)^a.
 \label{eq:rsi-rate-explicit}
\end{align}
In particular, $h_k=O(k^{-a})$.
\end{theorem}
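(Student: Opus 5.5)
The plan is to turn Proposition~\ref{prop:rsi-growth} into a one-step contraction for $h_k$ and then unroll it with an elementary product estimate. At a nonterminal iteration $k$, apply \eqref{eq:rsi-gap-remainder} with $x=x_k$, $v=v_k\in\mathcal S(x_k)$ and $\eta=\eta_k\in(0,1]$. After subtracting $f^\star$ this gives
\[
 h_{k+1}\leq h_k-\eta_k g_k+\frac{M_{\rm RSI}}{2}\eta_k^2 g_k
 = h_k-\eta_k g_k\Bigl(1-\frac{M_{\rm RSI}}{2}\eta_k\Bigr).
\]
For $k\geq K_{\rm RSI}$ we have $k+a\geq aM_{\rm RSI}/2$ by \eqref{eq:rsi-threshold}, hence $\frac{M_{\rm RSI}}{2}\eta_k\leq 1$. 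The bracket is therefore nonnegative, and we may use $g_k\geq h_k$ from Lemma~\ref{lem:gap-domination}. This yields
\[
 h_{k+1}\leq\Bigl(1-\eta_k+\frac{M_{\rm RSI}}{2}\eta_k^2\Bigr)h_k ,\qquad k\geq K_{\rm RSI}.
\]
If the algorithm terminates, its terminal point is optimal by Theorem~\ref{thm:baseline}. Otherwise the estimate holds at every $k\geq K_{\rm RSI}$.

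Next, unroll the recursion using $1-\eta+c\eta^2\leq(1-\eta)\exp(c\eta^2/(1-\eta))$. The first case is $k\geq1$, where $\eta_k<1$: write $1-\eta_k+c\eta_k^2=(1-\eta_k)(1+c\eta_k^2/(1-\eta_k))$ and apply $1+u\leq e^u$. With $c=M_{\rm RSI}/2$ and $\eta_j=a/(j+a)$ one has $1-\eta_j=j/(j+a)$ and $\eta_j^2/(1-\eta_j)=a^2/(j(j+a))$. Multiplying from $j=K$ to $k-1$, where $K=K_{\rm RSI}\geq1$, gives
\[
 h_k\leq h_K\prod_{j=K}^{k-1}\frac{j}{j+a}\;\exp\Bigl(\frac{M_{\rm RSI}a^2}{2}\sum_{j\geq K}\frac{1}{j(j+a)}\Bigr).
\]

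Two elementary estimates then finish \eqref{eq:rsi-rate-hK}. First, $\prod_{j=K}^{k-1}\frac{j}{j+a}\leq\bigl(\frac{K+a}{k+a}\bigr)^a$ for real $a\geq2$. I would prove this via $\log(1-a/(j+a))\leq -a/(j+a)\leq -a\int_{j}^{j+1}\frac{dt}{t+a}$, since $1/(t+a)$ is decreasing; summing gives $\leq -a\log\frac{k+a}{K+a}$. Second, the tail sum must satisfy $\sum_{j\geq K}\frac{1}{j(j+a)}\leq\frac{1}{K+a-1}$. A telescoping comparison $\frac{1}{j(j+a)}\leq\frac{1}{(j+a-1)(j+a)}$ fails because $j<j+a-1$. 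Instead I would bound $\frac{1}{j(j+a)}\leq\frac{1}{(j-1+a/2)(j+a/2)}$, which holds by AM--GM-type comparison since $j(j+a)\geq(j-1+a/2)(j+a/2)$ for $a\geq2$. Telescoping then gives $\frac{1}{K-1+a/2}$, which is close to but weaker than $\frac{1}{K+a-1}$. \emph{This constant is the main obstacle.} If the exact $K+a-1$ cannot be obtained, I would adjust by absorbing the product bound differently. One option is to pair the exponent with $\prod\frac{j}{j+a}\leq$ an exact gamma ratio $\frac{\Gamma(k)\Gamma(K+a)}{\Gamma(K)\Gamma(k+a)}$ and use that slack. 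The other is to combine $1-\eta+c\eta^2\leq\exp(-\eta+c\eta^2)$ with $\sum_{j\geq K}\eta_j^2\leq a^2/(K+a-1)$, which telescopes cleanly, together with $\sum_{j=K}^{k-1}\eta_j\geq a\log\frac{k+a}{K+a}$. This second route gives exactly the stated exponent $\frac{M_{\rm RSI}a^2}{2(K+a-1)}$ and the factor $\bigl(\frac{K+a}{k+a}\bigr)^a$, so it is the one to use.

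Finally, \eqref{eq:rsi-rate-explicit} follows by inserting $h_K\leq a^2LD^2/(2(K+a-1))$ from Theorem~\ref{thm:baseline} (valid since $K\geq1$) and $M_{\rm RSI}=L/(\alpha\nu)$. The $O(k^{-a})$ claim is then immediate.
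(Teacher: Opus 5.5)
Your committed route is exactly the paper's proof: the one-step bound $h_{j+1}\le(1-\eta_j+\tfrac{M_{\rm RSI}}{2}\eta_j^2)h_j$ for $j\ge K_{\rm RSI}$, then $1+u\le e^u$, then the bounds $\sum_{j=K}^{k-1}\eta_j\ge a\log\frac{k+a}{K+a}$ and $\sum_{j\ge K}\eta_j^2\le a^2/(K+a-1)$, and finally Theorem~\ref{thm:baseline} applied at $K_{\rm RSI}\ge1$. You were right to drop the abandoned detour: its claimed inequality $j(j+a)\ge(j-1+a/2)(j+a/2)$ actually fails for larger $a$ (e.g.\ $a=6$, $j=1$ gives $7<12$).
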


\begin{proof}
Suppose that it does not terminate.
For every $j\geq K_{\rm RSI}$, ~\eqref{eq:rsi-threshold} gives
\[
 j+a\geq \frac{aM_{\rm RSI}}{2}.
\]
Since $\eta_j=a/(j+a)$, it follows that
\[
 \frac{M_{\rm RSI}}{2}\eta_j
 =\frac{aM_{\rm RSI}}{2(j+a)}
 \leq1.
\]

Lemma~\ref{lem:progress} and Proposition~\ref{prop:rsi-growth} imply
\begin{equation}
 h_{j+1}
 \leq h_j-\eta_j
 \left(1-\frac{M_{\rm RSI}}{2}\eta_j\right)g_j.
 \label{eq:rsi-one-step}
\end{equation}
The coefficient of $g_j$ in \eqref{eq:rsi-one-step} is then nonnegative.  Using $g_j\geq h_j$ therefore yields
\begin{equation}
 h_{j+1}
 \leq
 \left(1-\eta_j+\frac{M_{\rm RSI}}{2}\eta_j^2\right)h_j,
 \qquad j\geq K_{\rm RSI}.
 \label{eq:rsi-linear-recurrence}
\end{equation}
Iterating \eqref{eq:rsi-linear-recurrence} and using $1+u\leq e^u$ gives, for $k\geq K_{\rm RSI}$,
\begin{align*}
 h_k
 &\leq h_{K_{\rm RSI}}
 \exp\left\{
 -a\sum_{j=K_{\rm RSI}}^{k-1}\frac{1}{j+a}
 +\frac{M_{\rm RSI}a^2}{2}
  \sum_{j=K_{\rm RSI}}^{k-1}\frac{1}{(j+a)^2}
 \right\}.
\end{align*}
With the fact that
\[
 \sum_{j=K_{\rm RSI}}^{k-1}\frac{1}{j+a}
 \geq\log\left(\frac{k+a}{K_{\rm RSI}+a}\right),
 \qquad
 \sum_{j=K_{\rm RSI}}^{k-1}\frac{1}{(j+a)^2}
 \leq\frac{1}{K_{\rm RSI}+a-1},
\]
\eqref{eq:rsi-rate-hK} holds.
Since $K_{\rm RSI}\geq1$, the global baseline bound in Theorem~\ref{thm:baseline} applies at $K_{\rm RSI}$.
Substituting that bound and $M_{\rm RSI}=L/(\alpha\nu)$ proves \eqref{eq:rsi-rate-explicit}.
\end{proof}

The same rate holds for the minimum Frank--Wolfe gap over the latter half of the iterates.

\begin{corollary}
\label{cor:rsi-gap-rate}
Under the assumptions of Theorem~\ref{thm:rsi-open-loop}, define
\begin{equation}
 \widehat K_{\rm RSI}:=
 \max\left\{K_{\rm RSI},
 \left\lceil aM_{\rm RSI}-a\right\rceil\right\}.
 \label{eq:rsi-gap-threshold}
\end{equation}
For $k\geq2$, set
\begin{equation}
 g_k^{\rm rec}:=
 \min_{\lfloor k/2\rfloor\leq j<k}g_j.
 \label{eq:recent-gap}
\end{equation}
If the algorithm does not terminate, then, for all sufficiently large $k$,
\begin{equation}
 g_k^{\rm rec}=O(k^{-a}).
 \label{eq:rsi-gap-rate}
\end{equation}
More precisely, if $m:=\lfloor k/2\rfloor\geq\widehat K_{\rm RSI}$ and $k\geq\lceil2a\rceil$, then
\begin{equation}
 g_k^{\rm rec}
 \leq\frac{2h_m}{a\log(3/2)}.
 \label{eq:rsi-gap-explicit}
\end{equation}
\end{corollary}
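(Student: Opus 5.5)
Because \(\widehat K_{\rm RSI}\geq\lceil aM_{\rm RSI}-a\rceil\), every \(j\geq\widehat K_{\rm RSI}\) satisfies \(M_{\rm RSI}\eta_j\leq1\). Hence the coefficient in \eqref{eq:rsi-one-step} obeys \(1-\tfrac{M_{\rm RSI}}{2}\eta_j\geq\tfrac12\). The same one-step inequality from Lemma~\ref{lem:progress} and Proposition~\ref{prop:rsi-growth} then gives, for \(j\geq\widehat K_{\rm RSI}\),
\[
 \frac{\eta_j}{2}\,g_j\leq h_j-h_{j+1}.
\]
The plan is to sum this over the recent window \(m\leq j\leq k-1\), where \(m=\lfloor k/2\rfloor\geq\widehat K_{\rm RSI}\). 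Bounding each \(g_j\) below by \(g_k^{\rm rec}\) and dropping \(h_k\geq0\) from the telescoping sum yields
\[
 \frac{g_k^{\rm rec}}{2}\sum_{j=m}^{k-1}\eta_j\leq h_m-h_k\leq h_m .
\]

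It remains to bound \(\sum_{j=m}^{k-1}\eta_j=a\sum_{j=m}^{k-1}\frac{1}{j+a}\) from below. The integral comparison gives
\[
 \sum_{j=m}^{k-1}\frac{1}{j+a}\geq\log\frac{k+a}{m+a}.
\]
Since \(m\leq k/2\), the ratio satisfies \((k+a)/(m+a)\geq(k+a)/(k/2+a)\). This exceeds \(3/2\) exactly when \(2(k+a)\geq 3(k/2+a)\), i.e. \(k\geq2a\). That is the role of the hypothesis \(k\geq\lceil 2a\rceil\). Hence \(\sum\eta_j\geq a\log(3/2)\), and therefore \(g_k^{\rm rec}\leq 2h_m/(a\log(3/2))\), which is \eqref{eq:rsi-gap-explicit}. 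Nontermination ensures \(g_j>0\) and that all iterates in the window exist.

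For the asymptotic statement \eqref{eq:rsi-gap-rate}, apply Theorem~\ref{thm:rsi-open-loop} at index \(m\geq\widehat K_{\rm RSI}\geq K_{\rm RSI}\). This gives \(h_m=O((m+a)^{-a})\). Since \(m+a\geq k/2-1+a\), this is \(O(k^{-a})\), with the constant enlarged by roughly \(2^a\). The only delicate points are the two threshold checks: that \(\widehat K_{\rm RSI}\) makes the coefficient at least \(1/2\) rather than merely nonnegative, and the elementary inequality \(k\geq 2a\Rightarrow (k+a)/(m+a)\geq 3/2\). Both are routine.
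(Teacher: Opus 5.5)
Your proposal is correct and follows the paper's proof essentially step for step. The threshold $\widehat K_{\rm RSI}$ gives $M_{\rm RSI}\eta_j\le 1$, so \eqref{eq:rsi-one-step} yields $h_{j+1}\le h_j-\tfrac{\eta_j}{2}g_j$; this is telescoped over the window $m\le j\le k-1$ and combined with $\sum_{j=m}^{k-1}\eta_j\ge a\log((k+a)/(m+a))\ge a\log(3/2)$ for $k\ge 2a$, and the asymptotic claim follows from Theorem~\ref{thm:rsi-open-loop} applied at $m\ge K_{\rm RSI}$, where your bound $m+a\ge k/2$ (costing a factor $2^a$) spells out the paper's step $h_m=O(m^{-a})=O(k^{-a})$.
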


\begin{proof}
For $j\geq\widehat K_{\rm RSI}$, one has $M_{\rm RSI}\eta_j\leq1$.  Hence \eqref{eq:rsi-one-step} gives
\[
 h_{j+1}\leq h_j-\frac{\eta_j}{2}g_j.
\]
Summing from $m$ to $k-1$ and using $h_k\geq0$ yields
\[
 \frac12 g_k^{\rm rec}\sum_{j=m}^{k-1}\eta_j
 \leq\frac12\sum_{j=m}^{k-1}\eta_jg_j\leq h_m.
\]
Furthermore,
\[
 \sum_{j=m}^{k-1}\eta_j
 \geq a\log\left(\frac{k+a}{m+a}\right)
 \geq a\log(3/2),
\]
where the last inequality follows from $m\leq k/2$ and $k\geq2a$.
This proves \eqref{eq:rsi-gap-explicit}.  Theorem~\ref{thm:rsi-open-loop} gives $h_m=O(m^{-a})=O(k^{-a})$, proving \eqref{eq:rsi-gap-rate}.
\end{proof}

Proposition~\ref{prop:rsi-growth} shows that exact Riemannian scaling, together with the gradient lower bound, yields the Riemannian analogue of strong $(M,1)$-growth in~\cite{wirth2025affine}.
The explicit bound also shows a trade-off in the schedule parameter.
Larger values of $a$ improve the asymptotic exponent but may increase the threshold and prefactor.
\subsubsection{Spherical balls}

As a non-Hadamard ambient manifold, consider the unit round sphere
\[
 \mathbb S^{n-1}
 :=
 \{x\in\mathbb R^n:\|x\|_2=1\},
 \qquad n\geq2,
\]
with its standard metric and geodesic distance \(d_{\mathbb S}\).  For \(o\in\mathbb S^{n-1}\) and \(R>0\), define the closed geodesic ball
\[
 \overline B_{\mathbb S}(o,R)
 :=
 \{x\in\mathbb S^{n-1}:d_{\mathbb S}(x,o)\leq R\}.
\]
The next proposition establishes an exact Riemannian scaling inequality for these balls when \(R<\pi/2\).

\begin{proposition}[Riemannian scaling inequality on a spherical ball]
\label{prop:sphere-ball-scaling}
Let $n\geq2$, let $o\in\mathbb S^{n-1}$,  $0<R<\pi/2$ and set $\X:=\overline B_{\mathbb S}(o,R)$.
Then $\X$ is compact and contains the unique minimizing geodesic joining each pair of its points.  
Moreover, $\X$ satisfies Definition~\ref{def:rsi} with
$\alpha_R=\frac12\cot R$; that is, for every
$x\in\X$, $w\in T_x\mathbb S^{n-1}$, and
$v\in\mathcal A(x,w)$,
\begin{equation}
 \inner{w}{v}_x
 \geq \frac12\cot R\,\norm{w}_x\norm{v}_x^2.
 \label{eq:sphere-scaling}
\end{equation}

\end{proposition}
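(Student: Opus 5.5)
The plan has four steps: settle the geometric facts, dispose of the trivial cases, reduce the maximization to a two-dimensional problem at a boundary point of $\X$, and finally verify a one-variable trigonometric inequality.

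\textbf{Geometric facts.} Compactness of $\X$ is immediate because it is a closed subset of the compact sphere. For convexity, take $x,y\in\X$. Then $d_{\mathbb S}(x,y)\leq 2R<\pi$, so the minimizing geodesic between them is unique. It stays in $\X$ because $R<\pi/2$: the function $z\mapsto\cos d_{\mathbb S}(z,o)=\inner{z}{o}$ restricted to a great-circle arc of length $<\pi$ is a positive combination of $\cos$ and $\sin$ of the arc parameter. Hence it is concave wherever it is positive, so it is at least $\cos R$ along the whole arc. It follows that $\Dset(x)=\{\Log_x(y):y\in\X\}$, with
\[
\Log_x(y)=\frac{\vartheta}{\sin\vartheta}\bigl(y-\cos\vartheta\,x\bigr),\qquad \vartheta=d_{\mathbb S}(x,y).
\]

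\textbf{Trivial cases.} Fix $x\in\X$ and $w\neq0$, and put $u:=w/\norm{w}_x$. If the maximizer is $v=0$, both sides of \eqref{eq:sphere-scaling} vanish, so assume $v\neq0$. The target is then
\[
\inner{u}{v}_x\geq\tfrac12\cot R\,\vartheta^2,\qquad \vartheta=\norm{v}_x .
\]
This is the spherical analogue of the Euclidean fact that a disk of radius $\rho$ satisfies the scaling inequality with constant $1/(2\rho)$. Here the effective radius is $\rho=\tan R$, which is the radius of the image of $\X$ under the gnomonic projection centered at $o$. That observation suggests the constant is sharp and indicates where to look for equality.

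\textbf{Reduction to a boundary point in $\mathbb S^2$.} Let $y=\Exp_x(v)$.
\begin{itemize}
\item First, $y$ lies on the boundary $d_{\mathbb S}(y,o)=R$. Otherwise one could move $y$ slightly along the geodesic ray from $x$ through $y$ and strictly increase $\inner{u}{\Log_x(y)}_x$, since that quantity scales linearly along the ray. This uses $\inner{u}{v}_x>0$, which holds because $0\in\Dset(x)$ and $v$ is a maximizer with $v\neq0$ (if the maximum were $0$ we could take $v=0$).
\item Second, a symmetry argument shows $y$ lies in the totally geodesic $\mathbb S^2$ spanned by $o$, $x$, and the direction $u$. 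Reflection across this subsphere preserves $\X$, $x$, and $u$, and it cannot decrease the objective when the out-of-plane component is removed. When $o,x,u$ are degenerate, rotational symmetry about the $x$–$o$ axis gives the same conclusion.
\end{itemize}
The problem is then two-dimensional: $x$ lies at distance $s\leq R$ from $o$, and $y$ is a point of the circle of radius $R$ about $o$ that maximizes $\vartheta\cos\beta$. Here $\beta$ is the angle at $x$ between $u$ and the geodesic to $y$.

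\textbf{Trigonometric inequality.} Rather than solve for $y$, I would use the first-order (Lagrange) condition at $y$. The differential of $y\mapsto\Log_x(y)$ stretches directions perpendicular to the geodesic by $\vartheta/\sin\vartheta$ and preserves the radial direction. Therefore the pulled-back vector $(\mathrm D\Log_x)_y^{*}u$ must be a nonnegative multiple of the outward normal $-\Log_y(o)/R$. Together with the spherical law of cosines in the triangle $o,x,y$, this yields an identity expressing $\cos\beta$ through the angle $\gamma$ at $y$ between the geodesic from $y$ back to $x$ and the direction to $o$. The inequality then reduces to a one-variable bound of the form
\[
\vartheta\cos\beta\;\ge\;\tfrac12\cot R\,\vartheta^2,\qquad\text{using } s\leq R .
\]
The Euclidean proof of this bound is $\inner{n}{y-x}\geq|y-x|^2/(2\rho)$ because $x$ lies in the disk. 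Its spherical counterpart should come from the law of cosines, $\cos s=\cos R\cos\vartheta+\sin R\sin\vartheta\cos\gamma\geq\cos R$, which gives $\cos\gamma\geq\cot R\,\tan(\vartheta/2)$. The remaining work is to combine this with $\tan(\vartheta/2)\geq\vartheta/2$ and the $\vartheta/\sin\vartheta$ distortion factor to reach $\cot R\,\vartheta/2$.

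\textbf{Main obstacle.} This last step is where I expect the difficulty. The distortion factor and the relation between $\beta$ and $\gamma$ must be shown to work in the favorable direction, and I would first check numerically that the worst case is $s=R$ with $\vartheta\to0$, where the bound is tight.
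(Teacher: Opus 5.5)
Your outline has the right ingredients: the boundary maximizer, the Lagrange condition, the $\vartheta/\sin\vartheta$ distortion of $\mathrm D\Log_x$, and the spherical law of cosines. But it stops at the step that carries the proof. You never show that the distortion and the relation between $\beta$ and $\gamma$ go in the favorable direction, and you leave this as the ``main obstacle.''

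The missing idea is a lower bound on the multiplier. Let $n_y:=-\Log_y(o)/R$, and let $T_y$ be the unit velocity at $y$ of the geodesic from $x$. Then $\inner{T_y}{n_y}_y=\cos\gamma$ and $(\mathrm D\Log_x)_yT_y=v/\vartheta$. Pairing the Lagrange condition $(\mathrm D\Log_x)_y^*u=\mu n_y$ with $T_y$ gives $\cos\beta=\mu\cos\gamma$. Taking norms gives $\mu=\norm{(\mathrm D\Log_x)_y^*u}_y\geq\norm{u}_x=1$, because every singular value of $(\mathrm D\Log_x)_y$ is $1$ or $\vartheta/\sin\vartheta\geq1$. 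Combined with your bound $\cos\gamma\geq\cot R\tan(\vartheta/2)>0$, this yields
\[
 \inner{u}{v}_x=\vartheta\mu\cos\gamma\geq\cot R\,\vartheta\tan(\vartheta/2)\geq\tfrac12\cot R\,\vartheta^2 .
\]
This is the paper's argument in dual form. The paper writes $w=\lambda\grad\psi_x(v)$ with $\psi_x=q\circ\Exp_x$ and $q=\frac12d(\cdot,o)^2$. It gets $\lambda\geq\norm{w}_x/R$ from $\norm{\mathrm D\Exp_x(v)}_{\rm op}\leq1$, which is the same fact as $\mu\geq1$. It then lower-bounds $\inner{\grad\psi_x(v)}{v}_x=R\vartheta\cos\gamma$ using the Hessian comparison $\operatorname{Hess}q\succeq R\cot R$ plus a Taylor identity, rather than the law of cosines. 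Your trigonometric version is more elementary and gives the slightly sharper intermediate factor $\tan(\vartheta/2)$ in place of $\vartheta/2$. The paper's version extends more readily to other curvature bounds.

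Two further steps need repair.

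First, the inequality must hold for every $v\in\mathcal A(x,w)$. So ``if the maximum were $0$ we could take $v=0$'' is not admissible: you must rule out nonzero maximizers with $\inner{u}{v}_x=0$. Your ray-scaling argument for $y\in\partial\X$ also presupposes $\inner{u}{v}_x>0$. Exclude interior $y$ instead by local invertibility of $\Exp_x$ at $v$, which is valid since $\vartheta\leq2R<\pi$: a nonzero linear functional cannot attain its maximum at an interior point of $\Log_x(\X)$. Then $\mu\geq1$ and $\cos\gamma>0$ give $\inner{u}{v}_x>0$ automatically.

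Second, the reduction to $\mathbb S^2$ is not justified as written. Reflection only shows that the mirrored point is also a maximizer. ``Removing the out-of-plane component'' would replace the given maximizer by a different one, which again conflicts with the universal quantifier. The reduction is also unnecessary, since the multiplier computation above works in any dimension and the law of cosines applies directly to the triangle $o,x,y$.
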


\begin{proof}
The set $\X$ is closed in the compact sphere.  Let $p,q\in\X$ and set $\rho=d(p,q)$.  If $p\neq q$, then $\rho\leq2R<\pi$, so their minimizing geodesic is unique and has the representation
\[
 \gamma(t)=
 \frac{\sin((1-t)\rho)}{\sin\rho}p
 +\frac{\sin(t\rho)}{\sin\rho}q,
 \qquad 0\leq t\leq1.
\]
The coefficients are nonnegative, and
\[
 \frac{\sin((1-t)\rho)+\sin(t\rho)}{\sin\rho}
 =\frac{\cos((1-2t)\rho/2)}{\cos(\rho/2)}
 \geq1.
\]
Since $o^\top p,o^\top q\geq\cos R>0$, we obtain $o^\top\gamma(t)\geq\cos R$, and hence $\gamma(t)\in\X$.

Fix $x\in\X$, $w\in T_x\mathbb S^{n-1}$, and $v\in\mathcal A(x,w)$.  The result is immediate if $w=0$ or $v=0$.
Otherwise, set
\[
 z:=\Exp_x(v),\qquad \tau:=\norm{v}_x=d(x,z).
\]
Then $0<\tau\leq2R<\pi$.  The exponential map is
\[
 \Exp_x(v)=\cos(\tau)x+\frac{\sin(\tau)}{\tau}v;
\]
see \citet[Examples~5.37]{boumal2023introduction}.
If $\xi=\xi_{\parallel}+\xi_{\perp}$ is decomposed into components parallel and orthogonal to $v$, differentiation gives
\[
 \norm{D\Exp_x(v)[\xi]}_z^2
 =\norm{\xi_{\parallel}}_x^2
 +\left(\frac{\sin\tau}{\tau}\right)^2
  \norm{\xi_{\perp}}_x^2.
\]
Thus $D\Exp_x(v)$ is nonsingular and
$
\norm{D\Exp_x(v)}_{\rm op}\leq1,
$
where $\|\cdot\|_{\rm op}$ denotes the operator norm induced by the Riemannian norms on $T_x\mathbb S^{n-1}$ and $T_z\mathbb S^{n-1}$.

Define
\[
 q(y):=\frac12d(y,o)^2,\qquad
 \psi_x(\xi):=q(\Exp_x(\xi)),\qquad
 K_x:=\Log_x(\X).
\]


For every $y\in\X$, the triangle inequality gives
$d(x,y)\leq2R<\pi=\operatorname{inj}(x)$.  Hence $\Log_x(y)$ is uniquely
defined and represents the unique minimizing geodesic from $x$ to $y$
\citep[Example~10.21 and Proposition~10.22]{boumal2023introduction}.
Together with the geodesic convexity proved above, this gives
\[
 \Dset(x)=K_x
 =\left\{\xi\in T_x\mathbb S^{n-1}:
 \norm{\xi}_x<\pi,\
 \psi_x(\xi)\leq\frac{R^2}{2}\right\}.
\]

If $z\in\operatorname{int}\X$, local invertibility of $\Exp_x$ would make
$v$ an interior point of $K_x$, contradicting the optimality of $v$ because
$w\neq0$.  Thus $z\in\partial\X$ and $q(z)=R^2/2$.



Since $D\Exp_x(v)$ is nonsingular and $\grad q(z)\neq0$,
\[
 \grad\psi_x(v)=D\Exp_x(v)^*\grad q(z)\neq0.
\]
Thus the active constraint satisfies the standard constraint qualification,
and the first-order necessary condition gives
\begin{equation}
 w=\lambda\grad\psi_x(v)
 \quad\text{for some }\lambda>0.
 \label{eq:sphere-kkt}
\end{equation}


Let $r(y):=d(y,o)$.
For $0<r(y)\leq R$, taking $\Delta=1$ in the lower Hessian comparison
bound of \citet[eqs.~(2.3)--(2.5)]{afsari2013convergence}, and using
quadratic homogeneity in $\eta$, gives
\[
 \operatorname{Hess}q(y)[\eta,\eta]
 \geq r(y)\cot r(y)\,\norm{\eta}_y^2,
 \qquad \eta\in T_y\mathbb S^{n-1}.
\]
At $y=o$,
$\operatorname{Hess}q(o)[\eta,\eta]=\norm{\eta}_o^2$.
Since $r\mapsto r\cot r$ decreases on $(0,\pi/2)$ and
$R\cot R\leq1$, it follows that
\[
 \operatorname{Hess}q\succeq R\cot R\,g
 \quad\text{on }\X.
\]


Let $\phi(t):=q(\Exp_x(tv))$.  By geodesic convexity,
$\Exp_x(tv)\in\X$, and hence
\[
 \phi''(t)\geq R\cot R\,\norm{v}_x^2.
\]
Using
\[
 \phi(0)=\phi(1)-\phi'(1)+\int_0^1t\phi''(t)\,dt
\]
together with $q(x)\leq q(z)$ gives
\[
 \inner{\grad\psi_x(v)}{v}_x
 \geq\frac{R\cot R}{2}\norm{v}_x^2.
\]
Moreover,
\[
 \norm{\grad\psi_x(v)}_x
 \leq\norm{D\Exp_x(v)}_{\rm op}\norm{\grad q(z)}_z
 \leq R.
\]
Since $w=\lambda\grad\psi_x(v)$, we have
$\lambda\geq\norm{w}_x/R$, and therefore
\[
 \inner{w}{v}_x
 =\lambda\inner{\grad\psi_x(v)}{v}_x
 \geq\lambda\frac{R\cot R}{2}\norm{v}_x^2
 \geq\frac12\cot R\,\norm{w}_x\norm{v}_x^2.
\]
\end{proof}

\begin{corollary}[Open-loop rates on spherical balls]
\label{cor:sphere-general-rate}
Let $n\geq2$, let $o\in\mathbb S^{n-1}$, and let $0<R<\pi/2$.  Set $\X:=\overline B_{\mathbb S}(o,R)$.  Suppose Assumption~\ref{ass:regularity} holds with constant $L$ and let $\nu:=\inf_{x\in\X}\norm{\grad f(x)}_x>0$.
Fix $a\geq2$ and define
\begin{equation}
 \begin{aligned}
 M_R&:=\frac{2L\tan R}{\nu},\\
 K_R&:=\max\left\{1,
 \left\lceil\frac{aL\tan R}{\nu}-a\right\rceil\right\},\\
 \widehat K_R&:=\max\left\{K_R,
 \left\lceil\frac{2aL\tan R}{\nu}-a\right\rceil\right\}.
 \end{aligned}
 \label{eq:sphere-general-constants}
\end{equation}
Algorithm~\ref{alg:ol-rfw} either terminates at an optimizer or generates an infinite sequence.  In the latter case, for every $k\geq K_R$,
\begin{equation}
 h_k\leq
 \frac{2a^2LR^2}{K_R+a-1}
 \exp\left(
  \frac{a^2L\tan R}{\nu(K_R+a-1)}
 \right)
 \left(\frac{K_R+a}{k+a}\right)^a.
 \label{eq:sphere-general-rate}
\end{equation}
Moreover, if $m:=\lfloor k/2\rfloor\geq\widehat K_R$ and $k\geq\lceil2a\rceil$, then
\begin{equation}
 g_k^{\rm rec}\leq\frac{2h_m}{a\log(3/2)}.
 \label{eq:sphere-general-gap}
\end{equation}
Consequently, $h_k=O(k^{-a})$ and $g_k^{\rm rec}=O(k^{-a})$.
\end{corollary}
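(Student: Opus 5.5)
The corollary specializes Theorem~\ref{thm:rsi-open-loop} and Corollary~\ref{cor:rsi-gap-rate} to $\X=\overline B_{\mathbb S}(o,R)$ by feeding in the constants from Proposition~\ref{prop:sphere-ball-scaling}. The plan has four steps: check the standing hypotheses, compute the spherical constants, recover the rate, and recover the gap bound.

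\textbf{Hypotheses.} Proposition~\ref{prop:sphere-ball-scaling} shows that $\X$ is compact. It also shows that $\X$ contains the unique minimizing geodesic between any two of its points, so $\X$ is weakly minimizing-geodesically convex. Hence the standing setting of Section~\ref{sec:setting} applies. The same proposition gives the $\alpha_R$-RSI of Definition~\ref{def:rsi} with $\alpha_R=\tfrac12\cot R>0$, which is positive because $R<\pi/2$. Assumption~\ref{ass:regularity} and $\nu>0$ are assumed in the statement. So every hypothesis of Theorem~\ref{thm:rsi-open-loop} and Corollary~\ref{cor:rsi-gap-rate} holds, and the dichotomy (termination at an optimizer versus an infinite sequence) is inherited directly.

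\textbf{Constants.} With $\alpha=\alpha_R$, the growth constant \eqref{eq:rsi-growth-constant} becomes
\[
M_{\rm RSI}=\frac{L}{\alpha_R\nu}=\frac{2L\tan R}{\nu}=M_R .
\]
Consequently $aM_{\rm RSI}/2=aL\tan R/\nu$, so $K_{\rm RSI}=K_R$. Likewise $aM_{\rm RSI}=2aL\tan R/\nu$, so $\widehat K_{\rm RSI}=\widehat K_R$.

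\textbf{Diameter and rate.} Every $x,y\in\X$ satisfy $d(x,y)\le d(x,o)+d(o,y)\le 2R$, so $D\le 2R$. Substituting $D^2\le 4R^2$ into the prefactor of \eqref{eq:rsi-rate-explicit} gives
\[
\frac{a^2LD^2}{2(K_R+a-1)}\le\frac{2a^2LR^2}{K_R+a-1}.
\]
The exponent in \eqref{eq:rsi-rate-explicit} becomes
\[
\frac{La^2}{2\alpha_R\nu(K_R+a-1)}=\frac{a^2L\tan R}{\nu(K_R+a-1)}.
\]
Together these yield \eqref{eq:sphere-general-rate}.

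\textbf{Gap bound.} The estimate \eqref{eq:sphere-general-gap} is exactly \eqref{eq:rsi-gap-explicit} once $\widehat K_{\rm RSI}=\widehat K_R$ is substituted. The $O(k^{-a})$ statements then follow as in Corollary~\ref{cor:rsi-gap-rate}, using $h_m=O(m^{-a})$ with $m\ge k/2-1$.

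The only point that needs care is the diameter bound $D\le2R$, which the triangle inequality settles. Otherwise the proof is pure bookkeeping: I expect no genuine obstacle beyond checking that each ceiling expression matches its counterpart after substitution.
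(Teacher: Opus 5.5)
Your proposal is correct and follows the paper's proof. Both arguments substitute $\alpha_R=\tfrac12\cot R$ from Proposition~\ref{prop:sphere-ball-scaling} into Theorem~\ref{thm:rsi-open-loop} and Corollary~\ref{cor:rsi-gap-rate}, identify $M_{\rm RSI}=M_R$, $K_{\rm RSI}=K_R$, and $\widehat K_{\rm RSI}=\widehat K_R$, and then bound the diameter. The only difference is that the paper proves $D=2R$ exactly, via the points $\Exp_o(\pm Ru)$, whereas you rightly note that the triangle-inequality bound $D\le2R$ is enough, since $D$ enters \eqref{eq:rsi-rate-explicit} only through the increasing prefactor.
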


\begin{proof}
Proposition~\ref{prop:sphere-ball-scaling} gives $\alpha_R=\frac12\cot R$.  The diameter of $\X$ is $D=2R$.  Indeed, the triangle inequality gives $D\leq2R$, while, for any unit vector $u\in T_o\mathbb S^{n-1}$, the points $\Exp_o(Ru)$ and $\Exp_o(-Ru)$ belong to $\X$ and are at distance $2R$ because $2R<\pi$.
Thus $M_{\rm RSI}=M_R$, $K_{\rm RSI}=K_R$, and $\widehat K_{\rm RSI}=\widehat K_R$.  Substituting these identities and $D=2R$ into Theorem~\ref{thm:rsi-open-loop} and Corollary~\ref{cor:rsi-gap-rate} gives \eqref{eq:sphere-general-rate} and \eqref{eq:sphere-general-gap}.
\end{proof}

\begin{corollary}[An exterior-target objective on a spherical ball]
\label{cor:sphere-ball}
Let $n\geq2$ and $o,y\in\mathbb S^{n-1}$.  Let $R,\Delta>0$ satisfy $d(o,y)=R+\Delta$ and $2R+\Delta<\pi/2$, and set $\X=\overline B_{\mathbb S}(o,R)$ and $f(x)=1-x^\top y$.  Then Assumption~\ref{ass:regularity} holds with $L=1$, and $f$ is $\cos(2R+\Delta)$-strongly geodesically convex on $\X$.  Furthermore,
\begin{equation}
 \alpha_R=\frac12\cot R,
 \qquad
 \nu:=\inf_{x\in\X}\norm{\grad f(x)}_x=\sin\Delta.
 \label{eq:sphere-problem-constants}
\end{equation}
The unique minimizer lies on $\partial\X$ and is given by $x^\star=\Exp_o(Ru_o)$, where $u_o=\Log_o(y)/(R+\Delta)$.  For the constants in Corollary~\ref{cor:sphere-general-rate}, one may take
\begin{equation}
 M_R=\frac{2\tan R}{\sin\Delta},
 \qquad
 K_R=
 \max\left\{1,
 \left\lceil\frac{a\tan R}{\sin\Delta}-a\right\rceil\right\}.
 \label{eq:sphere-rate-constants}
\end{equation}
Consequently, for every $a\geq2$, RFW-OL($a$) either terminates at $x^\star$ or satisfies $h_k=O(k^{-a})$ and $g_k^{\rm rec}=O(k^{-a})$.
\end{corollary}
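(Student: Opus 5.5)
We verify each claimed constant by working in the ambient coordinates $\mathbb S^{n-1}\subset\R^n$, and then invoke Corollary~\ref{cor:sphere-general-rate}. Write $\rho(x):=d(x,y)$, so that $f(x)=1-\cos\rho(x)$. For $x\in\X$ the triangle inequality gives $\Delta\le\rho(x)\le 2R+\Delta<\pi/2$. The Riemannian gradient is the tangential projection $\grad f(x)=-(y-(x^\top y)x)$. Its norm is $\sqrt{1-(x^\top y)^2}=\sin\rho(x)$. Because $\sin$ is increasing on $[0,\pi/2]$, we get $\nu=\inf_{\X}\sin\rho\ge\sin\Delta$. Equality is attained at the point of $\X$ closest to $y$, which is $\Exp_o(Ru_o)$ at distance exactly $\Delta$.

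For smoothness and strong convexity, fix a unit-speed geodesic $\gamma(s)=\cos(s)x+\sin(s)u$ with $u\perp x$. Then $f(\gamma(s))=1-\cos(s)\,x^\top y-\sin(s)\,u^\top y$, and its second derivative is $\cos(s)x^\top y+\sin(s)u^\top y=\gamma(s)^\top y=\cos\rho(\gamma(s))$. Along admissible geodesics the points stay in $\X$, since $\X$ is geodesically convex by Proposition~\ref{prop:sphere-ball-scaling}. Hence, in the velocity $v$, we have
\[
\phi''(t)=\|v\|^2\cos\rho(\Exp_x(tv))\in\bigl[\cos(2R+\Delta)\|v\|^2,\ \|v\|^2\bigr].
\]
The upper bound yields \eqref{eq:gsmooth} with $L=1$ by Taylor's formula. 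The lower bound gives \eqref{eq:strong-gconvexity} with $\mu=\cos(2R+\Delta)>0$, and this implies \eqref{eq:gconvex}. Thus Assumption~\ref{ass:regularity} holds. The RSI constant $\alpha_R=\tfrac12\cot R$ is exactly Proposition~\ref{prop:sphere-ball-scaling}.

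For the minimizer, minimizing $f$ on $\X$ is the same as minimizing $\rho$ over the ball. By the triangle inequality, $\rho(x)\ge d(o,y)-d(o,x)\ge\Delta$. Equality forces $d(o,x)=R$ and forces $x$ to lie on a minimizing geodesic from $o$ to $y$. Since $R+\Delta<\pi$, that geodesic is unique, so $x^\star=\Exp_o(Ru_o)$, which lies on $\partial\X$. Uniqueness also follows from strong convexity.

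Finally, substituting $L=1$ and $\nu=\sin\Delta$ into \eqref{eq:sphere-general-constants} gives \eqref{eq:sphere-rate-constants}. Corollary~\ref{cor:sphere-general-rate} then gives $h_k=O(k^{-a})$ and $g_k^{\rm rec}=O(k^{-a})$. Termination occurs only at an optimizer by Lemma~\ref{lem:gap-domination}, and here the only optimizer is $x^\star$.

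The main obstacle is bookkeeping rather than a hard step. One must bound $\rho$ uniformly along every admissible geodesic, which uses geodesic convexity of $\X$ together with $2R+\Delta<\pi/2$. One must also check the identity $\frac{d^2}{ds^2}(\gamma^\top y)=-\gamma^\top y$ carefully in the ambient coordinates.
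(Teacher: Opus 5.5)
Your proposal is correct and follows essentially the same route as the paper. It uses the triangle-inequality bounds $\Delta\le d(x,y)\le 2R+\Delta<\pi/2$, the second-derivative identity $\cos d(x,y)\,\norm{v}_x^2$ integrated along admissible geodesics to get $L=1$ and $\mu=\cos(2R+\Delta)$, the gradient norm $\sin d(x,y)$, and substitution into Corollary~\ref{cor:sphere-general-rate}. The only minor difference is that you derive the Hessian identity directly in ambient coordinates and identify $x^\star$ through the equality case of the triangle inequality together with uniqueness of short geodesics, instead of relying only on strong convexity for uniqueness.
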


\begin{proof}
Since $R<\pi/2$, Proposition~\ref{prop:sphere-ball-scaling} gives $\alpha_R=\frac12\cot R$.  For every $x\in\X$, the triangle inequality gives
\[
 \Delta\leq d(x,y)\leq2R+\Delta<\frac{\pi}{2}.
\]
On the unit sphere,
\[
 \grad f(x)=(x^\top y)x-y,
 \qquad
 \operatorname{Hess}f(x)[\xi,\xi]
 =(x^\top y)\norm{\xi}_x^2.
\]
Since $x^\top y=\cos d(x,y)$, for every $\xi\in T_x\mathbb S^{n-1}$,
\[
 \cos(2R+\Delta)\norm{\xi}_x^2
 \leq \operatorname{Hess}f(x)[\xi,\xi]
 \leq \norm{\xi}_x^2.
\]
Integrating these bounds along the minimizing geodesics in $\X$ proves Assumption~\ref{ass:regularity} with $L=1$ and the stated strong-convexity constant.

Because $\norm{\Log_o(y)}_o=R+\Delta$, the vector $u_o$ has unit norm.
The points $o$, $x^\star=\Exp_o(Ru_o)$, and $y=\Exp_o((R+\Delta)u_o)$ lie on the same minimizing geodesic, so
\[
 d(o,x^\star)=R,
 \qquad
 d(x^\star,y)=\Delta.
\]
Hence $x^\star\in\partial\X$ attains the lower bound $d(x,y)\geq\Delta$ and minimizes $f$; strong geodesic convexity gives uniqueness.  Moreover,
\[
 \norm{\grad f(x)}_x=\sin d(x,y)\geq\sin\Delta,
\]
where the inequality uses $d(x,y)<\pi/2$, and equality holds at $x^\star$.  This proves the second identity in \eqref{eq:sphere-problem-constants}.  Substituting $L=1$ and $\nu=\sin\Delta$ into \eqref{eq:sphere-general-constants} gives \eqref{eq:sphere-rate-constants}.  The rate statements follow from Corollary~\ref{cor:sphere-general-rate}.
\end{proof}

\subsection{Directional scaling from interiority}
\label{subsec:interior-scaling}
On a complete manifold, the interior-scaling argument below requires a uniform lower bound on the injectivity radius.  For $x\in\M$, let $\inj(x)$ denote the supremum of the radii on which $\Exp_x$ is a diffeomorphism from the corresponding tangent-space ball onto its image \citep[Section~10.2]{boumal2023introduction}.  The injectivity-radius function is positive and continuous.  Since $\X$ is compact,
\[
  \iota_{\X}:=\min_{x\in\X}\inj(x)>0
\]
is therefore well defined~\citep{sak1996riemannian}.
If $\M$ is Hadamard, then $\inj(x)=+\infty$ for every $x$, and hence $\iota_{\X}=+\infty$.

For $x\in\M$ and $r>0$, denote the closed Riemannian ball by
\[
 \overline B_{\mathcal M}(x,r)
 :=
 \{y\in\M:d(x,y)\leq r\}.
\]
\begin{assumption}[Unique strict-interior optimizer]
\label{ass:interior}
Problem~\eqref{eq:problem} has a unique optimizer $x^\star$, and there exists $\beta>0$ such that
\begin{equation}
  \overline B_{\mathcal M}(x^\star,\beta)\subseteq\X.
  \label{eq:interior-ball}
\end{equation}
\end{assumption}

The ball in Assumption~\ref{ass:interior} is taken in the ambient manifold $\M$, so the condition does not cover lower-dimensional feasible sets.

\begin{proposition}[Interiority implies length-normalized directional scaling]
\label{prop:interior-scaling}
Suppose Assumptions~\ref{ass:regularity} and~\ref{ass:interior} hold.  Set
\begin{equation}
 \rho:=\min\left\{\frac{\beta}{2},\frac{\iota_{\X}}{2}\right\},
 \qquad
 \sigma:=\frac{\rho}{D}.
 \label{eq:interior-constants}
\end{equation}
Then problem~\eqref{eq:problem} satisfies the $(\sigma,\beta/2)$ scaling condition in Definition~\ref{def:scaling}.  If $\M$ is Hadamard, one may take $\rho=\beta/2$ and $\sigma=\beta/(2D)$.
\end{proposition}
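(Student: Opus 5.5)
Fix $x\in\X\setminus\Xstar$ with $r:=r(x)=d(x,x^\star)\le\beta/2$. Since $\bar\ell(x)\le D$, it suffices to show
\[
 g(x)\ \ge\ \frac{\rho}{r}\,h(x),
\]
because then $g(x)/\bar\ell(x)\ge g(x)/D\ge(\rho/D)\,h(x)/r=\sigma h(x)/r$.

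The plan is a Guélat--Marcotte-type construction. Push from $x$ through $x^\star$ and continue the geodesic an extra distance $\rho$ to a point $y$ that still lies in the interior ball. Comparing the oracle value with the direction to $y$ then gains a factor $(r+\rho)/r\ge\rho/r$ over the trivial bound $g\ge h$.

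First, set up the direction. We have $r\le\beta/2<\iota_\X$ when $\beta/2\le\iota_\X/2$. In the other case $r\le\beta/2$ may exceed $\iota_\X/2$, so we only use $r+\rho$ when it is below $\iota_\X$; I expect this to need care. Let $u:=v^\star/r$, where $v^\star\in\Dset(x)$ is an admissible minimizing velocity with $\Exp_x(v^\star)=x^\star$; this exists by weak minimizing-geodesic convexity. Define $y:=\Exp_x((r+\rho)u)$.

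Second, show the candidate $w:=(r+\rho)u$ lies in $\Dset(x)$.
\begin{itemize}
\item By the triangle inequality along the geodesic, $d(x^\star,\Exp_x(tu))\le |t-r|\le\rho\le\beta/2$ for $t\in[r,r+\rho]$. Hence the extended segment lies in $\overline B_{\M}(x^\star,\beta)\subseteq\X$, and the initial part $t\in[0,r]$ lies in $\X$ because $v^\star\in\Dset(x)$.
\item Minimality, i.e.\ $d(x,y)=r+\rho$, is the main obstacle. In the Hadamard case it is automatic, since all geodesics minimize. In general I would argue as follows. Since $x\in\overline B(x^\star,\beta/2)$ and $y\in \overline B(x^\star,\rho)$, both endpoints lie in a small ball. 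The geodesic has length $r+\rho\le \beta/2+\iota_\X/2$. I would try to bound this length by $\iota_\X$, so that it does not pass a cut point. If that fails, I would shrink the extension to length $\rho'=\min\{\rho,\iota_\X-r\}$ and use $\rho\le\iota_\X/2$ together with $r\le\beta/2$. The intended mechanism is that a geodesic from $x$ of length less than $\inj(x)$ is minimizing, so the defining condition $\norm{w}_x=d(x,\Exp_x(w))$ holds.
\end{itemize}

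Third, run the convexity comparison along $\phi(t):=f(\Exp_x(tu))$ on $[0,r+\rho]$. This function is convex by \eqref{eq:gconvex} applied to $w\in\Dset(x)$, and $\phi(r)=f^\star$. Convexity gives the slope inequality
\[
 \phi'(0)\le\frac{\phi(r)-\phi(0)}{r}=-\frac{h(x)}{r}.
\]
Hence
\[
 \inner{\grad f(x)}{w}_x=(r+\rho)\phi'(0)\le-\frac{(r+\rho)h(x)}{r}.
\]
Oracle optimality over $\Dset(x)\ni w$ then gives
\[
 g(x)\ \ge\ -\inner{\grad f(x)}{w}_x\ \ge\ \frac{r+\rho}{r}\,h(x)\ \ge\ \frac{\rho}{r}\,h(x),
\]
which is the claimed bound.

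Finally, the Hadamard remark follows because $\iota_\X=+\infty$, so $\rho=\beta/2$ and $\sigma=\beta/(2D)$.
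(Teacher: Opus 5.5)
Your route differs from the paper's. The paper uses the direction toward $x^\star$ only to get $h(x)\le\norm{\grad f(x)}_x\,r(x)$ (first-order convexity plus Cauchy--Schwarz). It then tests the oracle against the short steepest-descent vector $w=-\rho\grad f(x)/\norm{\grad f(x)}_x$. Its length $\rho\le\iota_{\X}/2<\inj(x)$ makes the geodesic automatically minimizing, and its image stays in $\overline B_{\M}(x^\star,\beta)$ because $r+\rho\le\beta$.

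You instead test against $w=\frac{r+\rho}{r}v^\star$. By linearity alone you get the slightly sharper bound $g(x)\ge\frac{r+\rho}{r}h(x)$; convexity of $\phi$ on the extended segment is not needed, only \eqref{eq:first-order-convexity} for $v^\star$. The price is that your test geodesic has length $r+\rho$ rather than $\rho$. Its minimality is exactly where your proposal is incomplete.

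In the Hadamard case your argument is complete. It is also complete when $\beta\le\iota_{\X}$, since then $r+\rho\le\beta\le\iota_{\X}\le\inj(x)$. Otherwise your fallback $\rho'=\min\{\rho,\iota_{\X}-r\}$ does not close the gap. It works when $r\le\iota_{\X}$, because then $r+\rho'=\min\{r+\rho,\iota_{\X}\}\ge\rho$, although you did not check that the constant survives. But the ingredients you cite, $r\le\beta/2$ and $\rho\le\iota_{\X}/2$, do not give $r\le\iota_{\X}$ when $\beta>2\iota_{\X}$. In that case $\rho'<0$ and the construction is meaningless.

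The missing observation is that the target inequality $g(x)\ge(\rho/r)h(x)$ is trivial when $r\ge\rho$. Indeed, Lemma~\ref{lem:gap-domination} already gives $g(x)\ge h(x)\ge(\rho/r)h(x)$. So you only need the extension when $r<\rho$. Then $r+\rho<2\rho\le\iota_{\X}\le\inj(x)$, so $t\mapsto\Exp_x(tu)$ is minimizing on $[0,r+\rho]$ and $\norm{w}_x=d(x,\Exp_x(w))$. Your containment argument then puts the whole segment in $\X$, so $w\in\Dset(x)$. With this case split your proof is correct on general complete manifolds and yields the same constant $\sigma=\rho/D$.
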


\begin{proof}
Fix $x\neq x^\star$ with $d(x,x^\star)\leq\beta/2$.  Since $x^\star$ is the unique optimizer, $h(x)>0$ and $r(x)=d(x,x^\star)$.  Choose $v^\star\in\Dset(x)$ such that $\Exp_x(v^\star)=x^\star$.  First-order geodesic convexity and Cauchy--Schwarz give
\[
 h(x)
 \leq-\inner{\grad f(x)}{v^\star}_x
 \leq\norm{\grad f(x)}_x r(x).
\]
In particular, $\grad f(x)\neq0$.

Define
\[
 w:=-\rho\frac{\grad f(x)}{\norm{\grad f(x)}_x}.
\]
Since $\norm{w}_x=\rho<\inj(x)$, the geodesic $t\mapsto\Exp_x(tw)$ is minimizing.  For every $t\in[0,1]$,
\[
 d(x^\star,\Exp_x(tw))
 \leq d(x^\star,x)+d(x,\Exp_x(tw))
 =d(x^\star,x)+t\rho
 \leq\beta.
\]
Thus $\Exp_x(tw)\in\X$ by \eqref{eq:interior-ball}, and hence $w\in\Dset(x)$.  Oracle optimality now yields
\[
 g(x)\geq-\inner{\grad f(x)}{w}_x
 =\rho\norm{\grad f(x)}_x>0.
\]
Consequently, $\bar\ell(x)>0$, and
\[
 \frac{g(x)}{\bar\ell(x)}
 \geq\frac{\rho}{D}\norm{\grad f(x)}_x
 \geq\sigma\frac{h(x)}{r(x)}.
\]
This is the $(\sigma,\beta/2)$ scaling condition.
\end{proof}

\begin{corollary}[Fast open-loop rate under interiority]
\label{cor:interior-rate}
Let $a>2$, and suppose Assumptions~\ref{ass:regularity}, \ref{ass:heb}, and~\ref{ass:interior} hold.  With $\rho$ and $\sigma$ defined by \eqref{eq:interior-constants}, Algorithm~\ref{alg:ol-rfw} either terminates at $x^\star$ or satisfies
\[
 f(x_k)-f^\star
 =O\left(k^{-1/(1-\theta)}\right).
\]
The scaling radius and constant are $R_S=\beta/2$ and $\sigma=\rho/D$, respectively.
If the H\"olderian error bound holds globally, it may be chosen according to \eqref{eq:global-entry-condition} with $R_S=\beta/2$.
\end{corollary}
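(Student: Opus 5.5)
This corollary is a direct assembly of Proposition~\ref{prop:interior-scaling}, Lemma~\ref{lem:entry}, and Theorem~\ref{thm:abstract-acceleration}; no new estimate is needed.

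First, Proposition~\ref{prop:interior-scaling} applies under Assumptions~\ref{ass:regularity} and~\ref{ass:interior}. It shows that problem~\eqref{eq:problem} satisfies the $(\sigma,R_S)$ length-normalized directional scaling condition of Definition~\ref{def:scaling} with $R_S=\beta/2$ and $\sigma=\rho/D$, where $\rho$ is given by \eqref{eq:interior-constants}. Since $\rho>0$ (both $\beta>0$ and $\iota_{\X}>0$) and $D>0$, the constant $\sigma$ is positive.

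Second, Assumption~\ref{ass:heb} holds by hypothesis, so Lemma~\ref{lem:entry} with $\bar R=\min\{R_H,\beta/2\}$ supplies a finite entry index $S$. All hypotheses of Theorem~\ref{thm:abstract-acceleration} are now in place for $a>2$. That theorem gives two alternatives:
\begin{itemize}
\item the algorithm terminates at an optimizer, which is $x^\star$ by the uniqueness in Assumption~\ref{ass:interior};
\item or $h_k=f(x_k)-f^\star=O(k^{-1/(1-\theta)})$, with the explicit bound holding for every $k\geq S$.
\end{itemize}

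Third, suppose the error bound holds globally, so $r(x)\le c\,h(x)^\theta$ on all of $\X$. Then Corollary~\ref{cor:global-entry} with $R_S=\beta/2$ gives an explicit $S$ such that $r(x_k)\le\beta/2$ for every $k\ge S$. For this $S$ to serve in Theorem~\ref{thm:abstract-acceleration}, the iterates must lie in both local regions. The HEB region is the whole set $\X$ in this case, so $R_H$ may be taken as large as desired, for instance $R_H\ge D$. The scaling region needs $r(x)\le\beta/2$, which is exactly what Corollary~\ref{cor:global-entry} provides. Moreover, the proof of Theorem~\ref{thm:abstract-acceleration} uses the entry index only to guarantee that chain \eqref{eq:heb-scaling-chain} holds for $k\ge S$. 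Hence an $S$ chosen by \eqref{eq:global-entry-condition} is a legitimate entry index.

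The only point needing care is this last one. Lemma~\ref{lem:entry} asserts the strict inequality $r(x_k)<\bar R$, whereas Corollary~\ref{cor:global-entry} gives only $r(x_k)\le R_S$. This is harmless because Definition~\ref{def:scaling} is stated on the closed region $r(x)\le R_S$.
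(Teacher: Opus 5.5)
Your proposal is correct and follows the paper's own proof. Both assemble Proposition~\ref{prop:interior-scaling} for the scaling condition, Lemma~\ref{lem:entry} for a finite entry index, Theorem~\ref{thm:abstract-acceleration} for the rate, and Corollary~\ref{cor:global-entry} in the global case. You also check a step the paper leaves implicit: an index chosen by \eqref{eq:global-entry-condition} is a valid entry index, because the global error bound covers all of $\X$ and the scaling region $r(x)\le R_S$ is closed, so the non-strict inequality suffices.
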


\begin{proof}
Proposition~\ref{prop:interior-scaling} verifies the required scaling condition.
The entry index is finite by Lemma~\ref{lem:entry}.
The result follows from Theorem~\ref{thm:abstract-acceleration} and, in the global case, Corollary~\ref{cor:global-entry}.
\end{proof}

\begin{corollary}[Quadratic open-loop rate under strong geodesic convexity]
\label{cor:strong-convexity}
Suppose Assumptions~\ref{ass:regularity} and~\ref{ass:interior} hold, and suppose that $f$ is $\mu$-strongly geodesically convex in the sense of Definition~\ref{def:strong-gconvexity}.  Then, for every $x\in\X$,
\[
 d(x,x^\star)\leq\sqrt{\frac{2h(x)}{\mu}}.
\]
Thus the H\"olderian error bound holds globally with $c=\sqrt{2/\mu}$ and $\theta=1/2$.  Consequently, for every $a>2$, Algorithm~\ref{alg:ol-rfw} either terminates at $x^\star$ or satisfies
\[
 f(x_k)-f^\star=O(1/k^2).
\]
\end{corollary}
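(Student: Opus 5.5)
The plan is to obtain the quadratic growth bound directly from the first-order strong convexity inequality \eqref{eq:first-order-strong-convexity}, applied along an admissible minimizing geodesic starting at the optimizer. The rate then follows by combining three earlier results: Corollary~\ref{cor:interior-rate}, the case $\theta=1/2$ in Corollary~\ref{cor:theta-half}, and the explicit entry index of Corollary~\ref{cor:global-entry}.

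\emph{Step 1 (growth bound).} Fix $x\in\X$. Weak minimizing-geodesic convexity gives some $v\in\Dset(x^\star)$ with $\Exp_{x^\star}(v)=x$ and $\norm{v}_{x^\star}=d(x^\star,x)$. Applying \eqref{eq:first-order-strong-convexity} at the base point $x^\star$ yields
\[
 f(x)\geq f^\star+\inner{\grad f(x^\star)}{v}_{x^\star}+\frac{\mu}{2}d(x,x^\star)^2.
\]
The middle term is nonnegative by the optimality argument in the proof of Lemma~\ref{lem:gap-domination}. There, $\phi(t)=f(\Exp_{x^\star}(tv))$ is convex on $[0,1]$ and minimized at $t=0$, so $\phi'(0)\geq0$. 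It also vanishes outright, because Assumption~\ref{ass:interior} gives $\grad f(x^\star)=0$, but nonnegativity is all we need. Hence $h(x)\geq\frac{\mu}{2}d(x,x^\star)^2$. Since $x^\star$ is the unique optimizer, $r(x)=d(x,x^\star)$, and therefore $r(x)\leq\sqrt{2/\mu}\,h(x)^{1/2}$ on all of $\X$. This is Assumption~\ref{ass:heb} with $c=\sqrt{2/\mu}$, $\theta=1/2$, and any radius $R_H$; for instance $R_H=D$ makes the condition global.

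\emph{Step 2 (rate).} With the error bound established globally, Proposition~\ref{prop:interior-scaling} supplies the $(\rho/D,\beta/2)$ scaling condition. Corollary~\ref{cor:global-entry} then gives an explicit entry index $S$ with $R_S=\beta/2$, and Theorem~\ref{thm:abstract-acceleration} (equivalently Corollary~\ref{cor:interior-rate}) applies for every $a>2$. Setting $\theta=1/2$ in Corollary~\ref{cor:theta-half} produces $h_k\leq C\eta_{k-2}^2=O(1/k^2)$. If the algorithm terminates, Lemma~\ref{lem:gap-domination} guarantees the terminal point is optimal, and uniqueness makes it equal to $x^\star$.

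The only delicate step is the choice of base point in Step 1. The growth bound must be written at $x^\star$ rather than at $x$, and this requires an admissible direction in $\Dset(x^\star)$. Weak minimizing-geodesic convexity provides one, and Definition~\ref{def:strong-gconvexity} quantifies over every $v\in\Dset(x^\star)$, so the argument goes through on a general complete manifold without using $\Log$.
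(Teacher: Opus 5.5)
Your proof is correct and follows the paper's argument. You apply the first-order strong-convexity inequality at the base point $x^\star$ along an admissible $v\in\Dset(x^\star)$, then invoke Corollary~\ref{cor:interior-rate} (with Corollary~\ref{cor:theta-half} for $\theta=1/2$). The only difference is that you control the linear term by $\phi'(0)\geq0$ rather than by the paper's $\grad f(x^\star)=0$; this shows that the quadratic growth bound itself does not require interiority.
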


\begin{proof}
Assumption~\ref{ass:interior} makes $x^\star$ an unconstrained local minimizer in $\M$, so $\grad f(x^\star)=0$.  For any $x\in\X$, choose $v\in\Dset(x^\star)$ such that $\Exp_{x^\star}(v)=x$.  Then \eqref{eq:first-order-strong-convexity} gives
\[
 h(x)\geq\frac{\mu}{2}\norm{v}_{x^\star}^2
       =\frac{\mu}{2}d(x,x^\star)^2.
\]
The stated error bound follows, and Corollary~\ref{cor:interior-rate} gives the rate.
\end{proof}

\subsubsection{Relative-interior extension}

The interior argument also applies relative to a prescribed totally geodesic submanifold.

\begin{corollary}
\label{cor:relative-interior}
Let $\mathcal N\subseteq\M$ be a connected, complete, embedded, totally geodesic submanifold containing $\X$, endowed with the induced metric.
Throughout this corollary, all geometric quantities are computed intrinsically on $\mathcal N$.  Suppose that $\X$ is weakly minimizing-geodesically convex in $\mathcal N$ and that $f|_{\mathcal N}$ satisfies Assumption~\ref{ass:regularity} with smoothness constant $L_{\mathcal N}$.

Assume that the optimizer $x^\star$ is unique and that, for some $\beta>0$,
\[
 \overline B_{\mathcal N}(x^\star,\beta)\subseteq\X.
\]
Set
\[
 D_{\mathcal N}:=\diam_{\mathcal N}(\X),
 \qquad
 \iota_{\X}^{\mathcal N}
 :=\min_{x\in\X}\inj_{\mathcal N}(x)>0,
\]
and suppose that $D_{\mathcal N}>0$.  Then the intrinsic length-normalized directional-scaling condition holds with
\[
 R_S=\frac{\beta}{2},
 \qquad
 \rho_{\mathcal N}
 =\min\left\{\frac{\beta}{2},
              \frac{\iota_{\X}^{\mathcal N}}{2}\right\},
 \qquad
 \sigma_{\mathcal N}
 =\frac{\rho_{\mathcal N}}{D_{\mathcal N}}.
\]

If Assumption~\ref{ass:heb} holds with $\theta\in(0,1/2]$, then, for every $a>2$, Algorithm~\ref{alg:ol-rfw} either terminates at $x^\star$ or satisfies
\[
 f(x_k)-f^\star
 =O\left(k^{-1/(1-\theta)}\right).
\]
If $f|_{\mathcal N}$ is $\mu$-strongly geodesically convex in $\mathcal N$, then the intrinsic error bound holds globally with $c=\sqrt{2/\mu}$ and $\theta=1/2$, and hence
\[
 f(x_k)-f^\star=O(1/k^2).
\]
\end{corollary}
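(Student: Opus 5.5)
The plan is to reduce the corollary to results already proved for the ambient setting, by treating $\mathcal N$ itself as the ambient complete manifold. Since $\mathcal N$ is connected, complete and carries the induced metric, it is a complete Riemannian manifold in its own right. The corollary computes every quantity intrinsically in $\mathcal N$, so Section~\ref{sec:setting} through Proposition~\ref{prop:interior-scaling} and Corollaries~\ref{cor:interior-rate}--\ref{cor:strong-convexity} apply verbatim with $\M$ replaced by $\mathcal N$.

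Concretely, the intrinsic data are:
\begin{itemize}
\item[(i)] the feasible directions $\Dset_{\mathcal N}(x)\subseteq T_x\mathcal N$ built from $\Exp^{\mathcal N}$ and $d_{\mathcal N}$;
\item[(ii)] the gradient $\grad(f|_{\mathcal N})$;
\item[(iii)] the diameter $D_{\mathcal N}$ and the constant $L_{\mathcal N}$;
\item[(iv)] the injectivity radius $\inj_{\mathcal N}$ and the ball $\overline B_{\mathcal N}$.
\end{itemize}
Algorithm~\ref{alg:ol-rfw} is run with this intrinsic oracle and update. Total geodesicity guarantees that $\Exp^{\mathcal N}$ agrees with $\Exp^{\M}$ on $T\mathcal N$, so the iterates coincide with those of the ambient exponential update along $\mathcal N$-directions. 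This identification is only interpretive and is not needed for the proof.

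First, I would verify the hypotheses of Proposition~\ref{prop:interior-scaling} on $\mathcal N$.
\begin{itemize}
\item Weak minimizing-geodesic convexity in $\mathcal N$ is assumed.
\item Assumption~\ref{ass:regularity} holds for $f|_{\mathcal N}$ with constant $L_{\mathcal N}$.
\item Assumption~\ref{ass:interior} holds with the intrinsic ball $\overline B_{\mathcal N}(x^\star,\beta)$.
\item $D_{\mathcal N}>0$ is assumed.
\item Compactness of $\X$ together with positivity and continuity of $\inj_{\mathcal N}$ gives $\iota^{\mathcal N}_\X>0$.
\end{itemize}
Proposition~\ref{prop:interior-scaling} then yields the $(\sigma_{\mathcal N},\beta/2)$ scaling condition with $\rho_{\mathcal N}=\min\{\beta/2,\iota_\X^{\mathcal N}/2\}$.

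Next, for the HEB statement, I would invoke Corollary~\ref{cor:interior-rate} on $\mathcal N$, with the entry index supplied by Lemma~\ref{lem:entry} and the rate given by Theorem~\ref{thm:abstract-acceleration}, to obtain $O(k^{-1/(1-\theta)})$ for every $a>2$. For the strongly convex case, Corollary~\ref{cor:strong-convexity} applied on $\mathcal N$ gives $\grad(f|_{\mathcal N})(x^\star)=0$. This holds because $x^\star$ is an interior local minimizer relative to $\mathcal N$. Corollary~\ref{cor:strong-convexity} then gives $d_{\mathcal N}(x,x^\star)\le\sqrt{2h(x)/\mu}$, so the error bound holds globally with $c=\sqrt{2/\mu}$ and $\theta=1/2$, and the $O(1/k^2)$ rate follows.

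The main obstacle is justifying that nothing in the earlier proofs secretly uses the ambient manifold $\M$ rather than an arbitrary complete manifold. Examples would be the Hopf--Rinow existence of minimizing geodesics, compactness of $\Dset$ in Lemma~\ref{lem:direction-compact}, and continuity of the injectivity radius. Each of these needs only completeness and finite dimension, which $\mathcal N$ has as a complete embedded submanifold. Compactness of $\X$ in $\mathcal N$ also has to be checked: $\X$ is compact in $\M$ and $\mathcal N$ is embedded, so $\X$ is compact in the subspace topology of $\mathcal N$. I would state this check explicitly and otherwise cite the earlier results unchanged.
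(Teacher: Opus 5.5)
Your proposal is correct and follows the paper's proof. You regard $\mathcal N$ as the ambient complete manifold, apply Proposition~\ref{prop:interior-scaling} intrinsically to obtain the $(\sigma_{\mathcal N},\beta/2)$ scaling condition, and read off the two rates from Corollaries~\ref{cor:interior-rate} and~\ref{cor:strong-convexity}; your explicit checks (compactness of $\X$ in the embedded $\mathcal N$, positivity of $\iota_{\X}^{\mathcal N}$, and $\grad(f|_{\mathcal N})(x^\star)=0$ from the intrinsic interior ball) spell out what the paper compresses into ``completeness and compactness provide the standing geometric properties on $\mathcal N$.''
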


\begin{proof}
Completeness and compactness provide the standing geometric properties on $\mathcal N$.  Proposition~\ref{prop:interior-scaling} gives the stated scaling constant.  The two rates follow from Corollaries~\ref{cor:interior-rate} and~\ref{cor:strong-convexity}.
\end{proof}

The reduction requires the prescribed submanifold to contain the entire feasible set, relative interior in a proper face alone is not sufficient.

\begin{remark}[Directional scaling at boundary solutions]
The length-normalized directional-scaling condition need not hold at a boundary optimizer.  Let $\X=\operatorname{conv}\{(-1,0),(1,0),(0,1)\}\subset\R^2$ and
\[
 f(u,v)=v+\frac12u^2+\frac{\mu}{2}v^2,\qquad\mu>0.
\]
The unique constrained minimizer is $(0,0)$.  For all sufficiently small $\varepsilon>0$, $x_\varepsilon=(\varepsilon^2,\varepsilon)$ belongs to $\X$, the unique LMO endpoint is $(-1,0)$, and $g(x_\varepsilon)/\ell(x_\varepsilon)=\Theta(\varepsilon)$ while $h(x_\varepsilon)/d(x_\varepsilon,x^\star)=\Theta(1)$.  Hence no uniform $\sigma>0$ can satisfy \eqref{eq:scaling}; Proposition~\ref{prop:interior-scaling} does not extend to arbitrary boundary minimizers.
\end{remark}

\subsubsection{Loewner-constrained Karcher means}

We next specialize the interior-scaling result to the Loewner-constrained Karcher-mean problem on the affine-invariant SPD manifold.  Let $\mathbb S^d$ denote the space of real symmetric $d\times d$ matrices and define
\[
 \mathbb P_d:=\{X\in\mathbb S^d:X\succ0\},
 \qquad T_X\mathbb P_d=\mathbb S^d.
\]
For $A,B\in\mathbb S^d$, write $A\preceq B$ if $B-A$ is positive semidefinite, and $A\prec B$ if $B-A$ is positive definite.
\begin{proposition}[SPD Loewner-interval specialization]
\label{prop:spd-specialization}
Equip $\mathbb P_d$ with the affine-invariant metric
\[
 \inner{H}{K}_X:=\operatorname{tr}(X^{-1}HX^{-1}K),
 \qquad
 d(X,Y)=\norm{\log(X^{-1/2}YX^{-1/2})}_{\rm F}.
\]
Its geodesic from $X$ to $Y$ is $X\#_tY:=X^{1/2}(X^{-1/2}YX^{-1/2})^tX^{1/2}$.
For $0\prec\Xlo\prec\Xup$, let $\X=[\Xlo,\Xup]:=\{X\in\mathbb P_d:\Xlo\preceq X\preceq\Xup\}$, and consider
\begin{equation}
 f(X):=\frac12\sum_{i=1}^m w_i d(X,A_i)^2,
 \qquad A_i\in\mathbb P_d,\quad w_i>0,\quad \sum_iw_i=1.
 \label{eq:spd-centroid}
\end{equation}
Define the geodesic midpoint $X_c:=\Xlo\#_{1/2}\Xup$ and
\begin{align}
 \widehat D
 &:=\sqrt d\,\log\lambda_{\max}(\Xlo^{-1/2}\Xup\Xlo^{-1/2}),
 \label{eq:spd-diameter}\\
 R_i&:=\widehat D+d(X_c,A_i),
 \label{eq:spd-data-radius}\\
 \zeta_\kappa(r)&:=
 \begin{cases}
  \sqrt\kappa\,r\coth(\sqrt\kappa\,r),&r>0,\\
  1,&r=0,
 \end{cases}
 \qquad
 \widehat L:=\sum_{i=1}^m w_i\zeta_{1/2}(R_i).
 \label{eq:spd-smoothness}
\end{align}
Then $\mathbb P_d$ is a Hadamard manifold, $\X$ is compact and geodesically convex, and the exact RFW oracle over $\X$ admits a closed-form solution.
Moreover,
\begin{equation}
 D\leq\widehat D,
 \qquad
 f\ \text{satisfies Definition~\ref{def:strong-gconvexity} with $\mu=1$},
\label{eq:spd-certified-constants}
\end{equation}
and the quadratic upper model in Assumption~\ref{ass:regularity} is valid with smoothness parameter $\widehat L$.

Let $X^\star$ be the minimizer.
If $\Xlo\prec X^\star\prec\Xup$, then
\begin{equation}
 \beta_\star:=\min\left\{
 -\log\lambda_{\max}((X^\star)^{-1/2}\Xlo(X^\star)^{-1/2}),
 \ \log\lambda_{\min}((X^\star)^{-1/2}\Xup(X^\star)^{-1/2})
 \right\}>0
 \label{eq:spd-interior-radius}
\end{equation}
and $\overline B_{\mathbb P_d}(X^\star,\beta_\star)\subseteq\X$.  The global error bound holds with $c=\sqrt2$ and $\theta=1/2$, and the local scaling condition holds with
\begin{equation}
 \widehat\sigma=\frac{\beta_\star}{2\widehat D},\qquad
 R_S=\frac{\beta_\star}{2}.
 \label{eq:spd-rate-constants}
\end{equation}
For every $a>2$, Algorithm~\ref{alg:ol-rfw} therefore either terminates at $X^\star$ or satisfies
\begin{equation}
 f(X_k)-f^\star=O(1/k^2).
 \label{eq:spd-open-loop-rate}
\end{equation}
For the feedback short step in Proposition~\ref{prop:feedback}, 
the method likewise either terminates at $X^\star$ or satisfies
\begin{equation}
 f(X_k)-f^\star
 =O\left((1-\min\{\frac{1}{2}, \frac{\beta_\star^2}{16\widehat L\widehat D^2}\})^k\right).
 \label{eq:spd-feedback-rate}
\end{equation}
\end{proposition}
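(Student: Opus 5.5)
The proof is a verification of the hypotheses of Corollary~\ref{cor:strong-convexity} (equivalently Proposition~\ref{prop:interior-scaling} with Theorem~\ref{thm:abstract-acceleration}) and of Proposition~\ref{prop:feedback}, followed by substitution of the constants. I would proceed in the following order.

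\textbf{Step 1: geometry of $\mathbb P_d$ and of $\X$.} That $\mathbb P_d$ with the affine-invariant metric is Hadamard, with sectional curvature in $[-1/2,0]$, is standard and I would cite it. Compactness of $\X$ follows because it is closed and its eigenvalues lie in $[\lambda_{\min}(\Xlo),\lambda_{\max}(\Xup)]$. Geodesic convexity follows from operator concavity and monotonicity of the weighted geometric mean: $\Xlo\preceq X,Y$ implies $\Xlo=\Xlo\#_t\Xlo\preceq X\#_tY$, and symmetrically for $\Xup$.

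For the closed-form oracle I would cite the known Loewner-interval oracle of \citet{weber2023riemannian}. After congruence by $X^{1/2}$, the objective $\inner{\grad f(X)}{\Log_X Y}_X$ becomes $\operatorname{tr}(G\log Z)$ over $Z\in[X^{-1/2}\Xlo X^{-1/2},X^{-1/2}\Xup X^{-1/2}]$. This is solved by eigenprojections of $G$.

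For $D\leq\widehat D$, affine invariance lets me push all points to the interval $[I,P]$ with $P=\Xlo^{-1/2}\Xup\Xlo^{-1/2}$. Any two points $X,Y$ in it satisfy $\lambda_{\max}^{-1}(P)\, Y\preceq X\preceq\lambda_{\max}(P)\,Y$. Hence every eigenvalue of $\log(X^{-1/2}YX^{-1/2})$ lies in $[-\log\lambda_{\max}(P),\log\lambda_{\max}(P)]$, and the Frobenius norm gives the factor $\sqrt d$.

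\textbf{Step 2: strong convexity $\mu=1$ and smoothness $\widehat L$.} On a Hadamard manifold, $\tfrac12 d(\cdot,A)^2$ has Hessian $\succeq I$, which gives Definition~\ref{def:strong-gconvexity} with $\mu=1$ after averaging. The upper bound comes from the Hessian comparison theorem with curvature $\geq-\kappa$, $\kappa=1/2$: $\operatorname{Hess}\tfrac12 d(\cdot,A)^2\preceq\zeta_\kappa(d(\cdot,A))\,I$. For $X\in\X$, the triangle inequality gives $d(X,A_i)\leq d(X,X_c)+d(X_c,A_i)\leq\widehat D+d(X_c,A_i)=R_i$, using $d(X,X_c)\leq D\leq\widehat D$. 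Since $\zeta_\kappa$ is increasing, integrating along admissible geodesics yields \eqref{eq:gsmooth} with $\widehat L$.

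\textbf{Step 3: the interior ball.} Suppose $d(X,X^\star)\leq\beta_\star$. Then $X'=(X^\star)^{-1/2}X(X^\star)^{-1/2}$ has all eigenvalues in $[e^{-\beta_\star},e^{\beta_\star}]$, because the operator norm of $\log X'$ is at most its Frobenius norm. Therefore $X\succeq e^{-\beta_\star}X^\star$. By the definition of $\beta_\star$, $(X^\star)^{-1/2}\Xlo(X^\star)^{-1/2}\preceq e^{-\beta_\star}I$, so $\Xlo\preceq X$; the same argument gives $X\preceq\Xup$. Positivity of $\beta_\star$ is exactly the strict interiority $\Xlo\prec X^\star\prec\Xup$. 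Uniqueness of $X^\star$ follows from strong convexity.

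\textbf{Step 4: conclusion.} Corollary~\ref{cor:strong-convexity} supplies the global error bound with $c=\sqrt2$ and $\theta=1/2$, and the $O(1/k^2)$ rate. Proposition~\ref{prop:interior-scaling} in the Hadamard case gives $\sigma=\beta_\star/(2D)\geq\beta_\star/(2\widehat D)$. A smaller $\sigma$ remains valid, so $\widehat\sigma$ is admissible.

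For the feedback rate, Proposition~\ref{prop:feedback} with $\theta=1/2$ gives $C_{\rm fb}=\widehat\sigma^2/(2\widehat Lc^2)=\beta_\star^2/(4\widehat D^2\cdot 2\widehat L\cdot2)=\beta_\star^2/(16\widehat L\widehat D^2)$, which matches \eqref{eq:spd-feedback-rate}.

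The main obstacle is Step 2: establishing the curvature lower bound $-1/2$ for $\mathbb P_d$ and invoking a Hessian comparison valid for the squared distance. I would cite the standard curvature computation for the affine-invariant metric together with the comparison bound of \citet{afsari2013convergence} or Zhang--Sra, rather than derive either.
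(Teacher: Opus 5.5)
Your proposal is correct and follows the paper's proof essentially step for step. The shared steps are: the Hadamard structure with curvature in $[-1/2,0]$; geodesic convexity via monotonicity of the weighted geometric mean; the eigenvalue sandwich $[q^{-1},q]$ giving $D\leq\widehat D$; the Afsari-type Hessian comparison combined with the triangle inequality through $X_c$ to get $\widehat L$; the spectral-versus-Frobenius argument for the interior ball; and finally Proposition~\ref{prop:interior-scaling}, Corollary~\ref{cor:strong-convexity}, and Proposition~\ref{prop:feedback}, including the computation $C_{\rm fb}=\beta_\star^2/(16\widehat L\widehat D^2)$.

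The only steps you leave implicit are $X_c\in\X$, which you need for $d(X,X_c)\leq D$ and which follows at once from your geodesic-convexity argument, and the case $X=A_i$ of the Hessian bound, which is covered by $\zeta_{1/2}(0)=1$.
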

\begin{proof}
Under the stated normalization, $\mathbb P_d$  is a Hadamard manifold with sectional curvatures in $[-1/2,0]$ \citep{thanwerdas2023invariant}.
Since $\Xlo\succ0$, the interval $\X$ is a compact subset of $\mathbb P_d$.  Monotonicity of the weighted geometric mean gives
\[
 \Xlo=\Xlo\#_t\Xlo\preceq X\#_tY
 \preceq\Xup\#_t\Xup=\Xup,
 \qquad X,Y\in[\Xlo,\Xup],\quad t\in[0,1].
\]
Thus $\X$ is geodesically convex and $X_c\in\X$ \citep{bhatia2007positive}.
The closed-form oracle follows from~\citet[Theorem~4]{weber2023riemannian}.

Set $q:=\lambda_{\max}(\Xlo^{-1/2}\Xup\Xlo^{-1/2})>1$.  Then $\Xup\preceq q\Xlo$, and, for any $X,Y\in\X$,
\[
 X\preceq qY,
 \qquad Y\preceq qX.
\]
The eigenvalues of $X^{-1/2}YX^{-1/2}$ therefore lie in $[q^{-1},q]$, so $d(X,Y)\leq\sqrt d\log q=\widehat D$.

For each $i$, define
\[
 \varphi_i(X):=\frac12d(X,A_i)^2.
\]
Because $\mathbb P_d$ is Hadamard, each function $\varphi_i(X)$ is $1$-strongly geodesically convex \citep[Chapter~11]{boumal2023introduction}.
Since $\sum_i w_i=1$, the same holds for $f$.
Moreover, for every $X\in\X$,
\[
 d(X,A_i)\leq d(X,X_c)+d(X_c,A_i)
 \leq \widehat D+d(X_c,A_i)=R_i.
\]

For $X\neq A_i$, the Hessian comparison estimate in
\citet[eqs.~(2.3)--(2.5)]{afsari2013convergence}, applied with the sectional
curvature lower bound $-1/2$, gives
\[
 \operatorname{Hess}\varphi_i(X)[\xi,\xi]
 \leq
 \zeta_{1/2}\bigl(d(X,A_i)\bigr)\norm{\xi}_X^2.
\]
At $X=A_i$,
\[
 \operatorname{Hess}\varphi_i(A_i)[\xi,\xi]
 =\norm{\xi}_{A_i}^2,
\]
so the same estimate holds because $\zeta_{1/2}(0)=1$.  Since
$d(X,A_i)\leq R_i$ on $\X$ and $\zeta_{1/2}$ is nondecreasing,
\[
 \operatorname{Hess}\varphi_i(X)[\xi,\xi]
 \leq\zeta_{1/2}(R_i)\norm{\xi}_X^2.
\]
After weighting and summing, we obtain
\[
 \operatorname{Hess}f(X)[\xi,\xi]
 \leq\widehat L\norm{\xi}_X^2,
 \qquad X\in\X,\quad \xi\in T_X\mathbb P_d.
\]
For $v\in\Dset(X)$, set $\gamma(t)=\Exp_X(tv)$ and
$F(t):=f(\gamma(t))$.  Since $\gamma([0,1])\subseteq\X$ and
$\|\dot\gamma(t)\|_{\gamma(t)}=\|v\|_X$,
\[
 F''(t)
 =
 \operatorname{Hess}f(\gamma(t))
 [\dot\gamma(t),\dot\gamma(t)]
 \leq\widehat L\norm{v}_X^2.
\]
The integral form of Taylor's theorem therefore gives, for $t\in[0,1]$,
\[
 \begin{aligned}
 F(t)
 &=F(0)+tF'(0)+\int_0^t(t-s)F''(s)\,ds\\
 &\leq
 f(X)+t\inner{\grad f(X)}{v}_X
 +\frac{\widehat L}{2}t^2\norm{v}_X^2,
 \end{aligned}
\]
which is the required quadratic upper model.

Finally, let $d(X^\star,Y)\leq\beta_\star$ and set $Z=(X^\star)^{-1/2}Y(X^\star)^{-1/2}$.
Using the matrix spectral norm $\|\cdot\|_2$, we have
$
\norm{\log Z}_2\leq\norm{\log Z}_{\rm F}\leq\beta_\star,
$
and hence
\[
 e^{-\beta_\star}I\preceq Z\preceq e^{\beta_\star}I.
\]
By the definition of $\beta_\star$,
\begin{align*}
 (X^\star)^{-1/2}\Xlo(X^\star)^{-1/2}
 &\preceq e^{-\beta_\star}I\preceq Z,\\
 Z&\preceq e^{\beta_\star}I
 \preceq (X^\star)^{-1/2}\Xup(X^\star)^{-1/2}.
\end{align*}
Congruence by $(X^\star)^{1/2}$ shows that $Y\in\X$, proving the ball
inclusion.  Strong geodesic convexity gives the global HEB with
$c=\sqrt2$ and $\theta=1/2$.  Proposition~\ref{prop:interior-scaling},
together with $D\leq\widehat D$, gives
$\widehat\sigma=\beta_\star/(2\widehat D)$ and $R_S=\beta_\star/2$.
The two rates now follow from Corollary~\ref{cor:strong-convexity} and
Proposition~\ref{prop:feedback}.

\end{proof}


\section{Numerical experiments}
\label{sec:numerics}

The experiments examine the interior rate, the schedule-dependent boundary rate, and the computational cost of step-size selection.
All experiments were run in Python 3.9.12 on an Apple M1 Pro with 16 GB of memory, with BLAS restricted to one thread.

We compare three rules.  RFW-OL($a$) uses the predetermined schedule $\eta_k=a/(k+a)$.  The short step uses
$
\eta_k=\min\{1,g_k/(\widehat L\ell_k^2)\},
$
and bounded line search numerically minimizes
$f(\Exp_{x_k}(\eta v_k))$ over $\eta\in[0,1]$.  The latter is implemented
with \texttt{scipy.optimize.minimize\_scalar} using \texttt{method="bounded"}, \texttt{xatol=1e-8}, and \texttt{maxiter=60}.

For the Karcher-mean benchmark, an Armijo Riemannian gradient method computes
$X_{\rm ref}$ until
$\|\grad f(X_{\rm ref})\|_{X_{\rm ref}}\leq10^{-8}$.  Since the objective is
$1$-strongly geodesically convex,
$
0\leq f(X_{\rm ref})-f^\star \leq \frac12\|\grad f(X_{\rm ref})\|_{X_{\rm ref}}^2 =: \varepsilon_{\rm ref}.
$
Numerical primal errors are not truncated; plots and slope fits use only values above $\max\{10^{-12},10\varepsilon_{\rm ref}\}$.  Log--log slopes are obtained by least-squares fitting over the final half of these values.  In the interior experiment, the observed entry index is the first recorded $S$ after which $r(x_j)\leq\beta/2$ throughout the run.

\subsection{Strict-interior rate on the SPD manifold}
\label{subsec:controlled-experiment}

On the affine-invariant SPD manifold $\mathbb P_5$, consider the Karcher-mean problem
\begin{equation}
 \min_{\Xlo\preceq X\preceq\Xup}
 \frac12\sum_{i=1}^m w_i
 d^2(X,A_i).
 \label{eq:spd-centroid-experiment}
\end{equation}
We use sixteen equally weighted observations in eight inverse pairs
$(B_j,B_j^{-1})$, which makes $X^\star=I$ exact, and set
\[
 \Xlo=e^{-\beta}I,
 \qquad \Xup=e^{2-\beta}I,
 \qquad \beta\in\{0.1,0.5\}.
\]
Each $B_j=Q_j\operatorname{diag}(e^{\xi_j})Q_j^\top$, where $Q_j$ is Haar orthogonal and $\xi_j$ is uniform on $[-0.7,0.7]^5$.  For five fixed seeds we run OL2, OL3, OL4, and OL6 for 5000 updates when $\beta=0.1$ and 2500 updates when $\beta=0.5$.

\begin{figure}[ht]
 \centering
 \includegraphics[width=\textwidth]{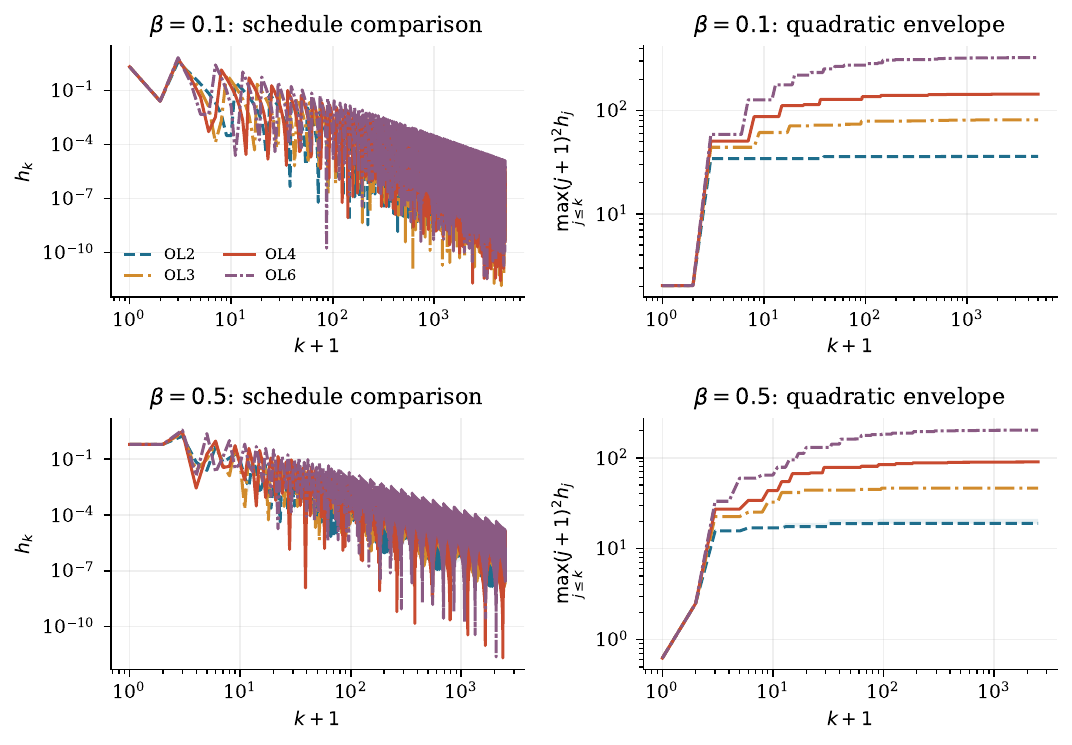}
 \caption{Strict-interior SPD instances.  Curves are medians over five seeds;
 shaded regions are interquartile ranges.
The right column shows the running envelope
$\max_{0\leq j\leq k}(j+1)^2h_j$, computed using only errors above the
reliability threshold.}
 \label{fig:theory-validation}
\end{figure}


Figure~\ref{fig:theory-validation} shows that the running quadratic envelopes
stabilize for both values of $\beta$.  The fitted slopes are close to $-2$ for
$\beta=0.1$ but less stable for $\beta=0.5$ because of stronger oscillations.
OL2 shows similar behavior, although the theorem requires $a>2$.

\subsection{Set scaling on a spherical ball}
\label{subsec:sphere-experiment}

The second experiment tests the schedule-dependent rates in Corollary~\ref{cor:sphere-general-rate} on a problem whose constants and optimizer are known explicitly:
\begin{equation}
 \min_{x\in\mathbb S^{49}} 1-x^\top y
 \quad\text{subject to}\quad d(x,o)\leq R,
 \qquad R=0.1,\quad d(o,y)=0.12.
 \label{eq:sphere-experiment}
\end{equation}
This problem corresponds to the $A=I$ case of the spherical-ball benchmark
in \citet{scieur2026strongly}, up to a constant factor. 
Here $\Delta=0.02$ and $2R+\Delta<\pi/2$.  Corollary~\ref{cor:sphere-ball} verifies $L=1$, $\nu=\sin\Delta$, and the explicit boundary optimizer.
And $\alpha_R=\tfrac12\cot(0.1)\approx4.983$.  
The oracle is reduced to the one-dimensional problem described by \citet[Section~9]{scieur2026strongly} and solved to numerical tolerance.  
We run OL2, OL3, and OL4 for 2500 updates from five fixed initial points at distance $0.08$ from $o$.

\begin{figure}[ht]
 \centering
 \includegraphics[width=\textwidth]{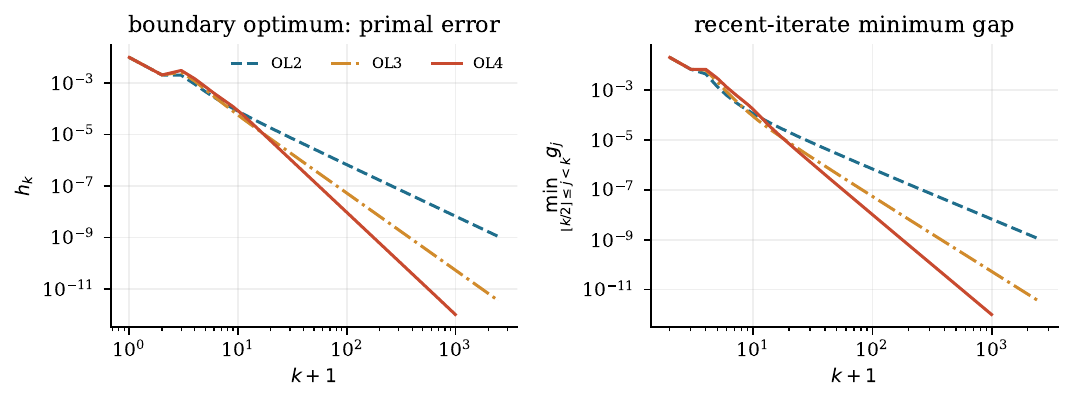}
 \caption{Sphere experiment.  The left panel shows the primal error;
 the right panel shows $g_k^{\rm rec}$, the smallest Frank--Wolfe gap among
 the most recent half of the iterates.  Curves are medians over five initial
 configurations; only values above the reliability threshold are shown.
 }
 \label{fig:sphere-scaling}
\end{figure}

The median half-tail slopes of the primal error are $-2.001$, $-3.000$, and $-3.997$ for OL2, OL3, and OL4.  The corresponding slopes of $g_k^{\rm rec}$ are $-2.002$, $-3.002$, and $-4.003$.  Thus both quantities distinguish the schedule-dependent orders in Theorem~\ref{thm:rsi-open-loop} and Corollary~\ref{cor:rsi-gap-rate}; see Figure~\ref{fig:sphere-scaling}.

\subsection{Karcher-mean cost benchmark}
\label{subsec:weber-experiment}

The last experiment returns to problem~\eqref{eq:spd-centroid-experiment} and the Loewner-interval benchmark of \citet{weber2023riemannian}.  
Riemannian means on SPD matrices arise in matrix averaging and
covariance-based classification
\citep{moakher2005differential,barachant2012multiclass}.
Each instance has ten SPD observations with logarithmic spread $s\in\{0.5,1\}$ and interval endpoints
\[
 \Xlo=\left(\frac1m\sum_{i=1}^m A_i^{-1}\right)^{-1},
 \qquad \Xup=\frac1m\sum_{i=1}^m A_i.
\]
The interval contains the unconstrained mean and has the closed-form RFW oracle used in Proposition~\ref{prop:spd-specialization}.
We use $d\in\{5,10\}$ and five seeds per $(d,s)$ group.  OL2, OL4, the short step, and bounded line search start at the interval midpoint and stop at FW gap $10^{-9}$ or after 2000 updates.

\begin{figure}[htbp]
 \centering
 \includegraphics[width=\textwidth]{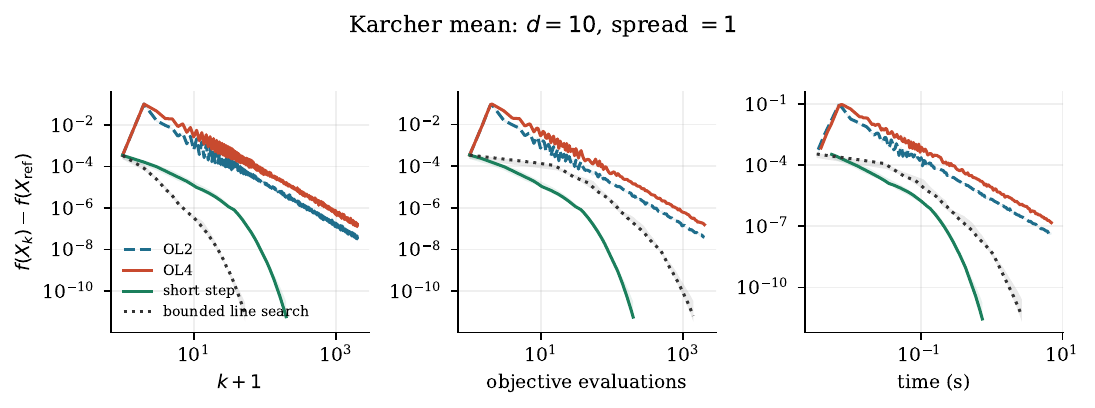}
 \caption{The Karcher-mean group ($d=10$, $s=1$), shown against
 updates, objective evaluations, and wall time.  Curves are medians over five
 seeds with interquartile bands.}
 \label{fig:weber-cost}
\end{figure}

See Figure~\ref{fig:weber-cost} for $(d=10,s=1)$.
The open-loop schemes exhibit the predicted quadratic decay but do not meet the target tolerance within the budget.  The short step gives the lowest runtime, while bounded line search reduces iterations at the expense of extra objective evaluations.  Table~\ref{tab:adaptive-results} reports the terminal statistics.

\begin{table}[H]
\centering
\caption{The $d=10$, $s=1$ Karcher-mean group.  Entries are medians over five
seeds; success means reaching FW gap $10^{-9}$ within 2000 updates.}
\label{tab:adaptive-results}
\scriptsize
\setlength{\tabcolsep}{3pt}
\begin{tabular}{@{}l r r r r r@{}}
\toprule
Method & Success & Updates & Objective evals & Final FW gap & Time (s) \\
\midrule
OL2             & 0/5 & 2000 & 2001  & $1.0\times10^{-4}$ & 7.09 \\
OL4             & 0/5 & 2000 & 2001  & $2.2\times10^{-4}$ & 7.03 \\
Short step      & 5/5 & 414  & 415   & $1.0\times10^{-9}$ & 1.45 \\
Bounded LS      & 3/5 & 1052 & 40494 & $9.2\times10^{-10}$& 72.20 \\
\bottomrule
\end{tabular}
\end{table}

\section{Conclusion}
\label{sec:conclusion}

We studied the standard exact Riemannian Frank--Wolfe method with the open loop \(\eta_k=a/(k+a)\). Under a local H\"olderian error bound and length-normalized directional scaling, the method attains the eventual rate \(O(k^{-1/(1-\theta)})\) for every \(a>2\); strong geodesic convexity and a strict-interior solution give the quadratic rate \(O(k^{-2})\). In the set-scaling regime, an exact Riemannian scaling inequality and a positive lower bound on the gradient norm yield \(O(k^{-a})\) convergence for every \(a\geq2\) after an explicit threshold index. The same order holds for the minimum Frank--Wolfe gap over the most recent half of the iterates.  Every closed spherical ball with $0<R<\pi/2$ satisfies the exact scaling inequality with $\alpha_R=\tfrac12\cot R$, so these rates apply to any objective on the ball that satisfies the standing regularity and positive gradient lower-bound conditions.

These results show that fast RFW convergence need not rely on gap-based or line-search feedback when the objective and feasible-set geometry provide sufficient control of the oracle direction. The numerical experiments recover the predicted quadratic and schedule-dependent polynomial orders and illustrate the different information requirements of predetermined and feedback-based step rules. The analysis uses exact RFW oracles and exponential-map updates, with Hadamard manifolds as the main algorithmic setting. Extending the schedule-dependent guarantees to inexact oracles or retraction-based updates remains an open direction.

\appendix
\section{An open-loop sequence lemma}
\label{app:sequence}

The following scalar result is adapted from the proof of \citet[Lemma~3.5]{wirth2023acceleration}.

\begin{lemma}[Open-loop nonlinear recurrence]
\label{lem:sequence}
Let $a>2$, let $\lambda\in(0,1)$ satisfy $\lambda a\geq2$, let $\theta\in[0,1/2]$, and let $1\leq S\leq T$.  For the auxiliary integer indices $j\geq-1$, set $\eta_j=a/(j+a)$.  Suppose $A,B,C>0$ and that sequences $(C_t)_{t=S}^{T-1}$ and $(u_t)_{t=S}^{T}$ satisfy $0\leq C_t\leq C$, $u_t\geq0$, and
\begin{equation}
 u_{t+1}\leq
 (1-\lambda\eta_t)u_t
 -\eta_t A C_t u_t^{1-\theta}
 +\eta_t^2 B C_t,
 \qquad S\leq t<T.
 \label{eq:sequence-recurrence}
\end{equation}
Then, for every $S\leq t\leq T$,
\begin{equation}
 u_t\leq\max\left\{
 \left(\frac{\eta_{t-2}}{\eta_{S-1}}\right)^{\!1/(1-\theta)}u_S,
 \left(\frac{\eta_{t-2}B}{A}\right)^{\!1/(1-\theta)}
 +\eta_{t-2}^2BC
 \right\}.
 \label{eq:sequence-bound}
\end{equation}
\end{lemma}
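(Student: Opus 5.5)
We argue by induction on $t$, splitting each step according to whether the nonlinear term dominates the quadratic remainder. Write $p:=1/(1-\theta)\in[1,2]$, and let $\Phi_t$ denote the right-hand side of \eqref{eq:sequence-bound}. The base case $t=S$ is immediate: $\eta_{S-2}\geq\eta_{S-1}$, so the first entry of the maximum is at least $u_S$. For the inductive step, assume $u_t\leq\Phi_t$ for some $S\leq t<T$. We distinguish two cases, depending on the sign of $Bu\eta_t-Au_t^{1-\theta}$, i.e.\ on whether $u_t^{1-\theta}\geq \eta_tB/A$.

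\emph{Case 1: $u_t^{1-\theta}\geq\eta_tB/A$.} Then $-\eta_tAC_tu_t^{1-\theta}+\eta_t^2BC_t\leq0$ because $C_t\geq0$, so $u_{t+1}\leq(1-\lambda\eta_t)u_t\leq(1-2/(t+a))u_t$, using $\lambda a\geq2$. The key scalar fact is
\[
 1-\frac{2}{t+a}\leq\left(\frac{\eta_{t-1}}{\eta_{t-2}}\right)^{p}=\left(\frac{t+a-2}{t+a-1}\right)^{p}.
\]
Since $p\leq2$ and the base is less than $1$, it suffices to check $(t+a-2)/(t+a)\leq((t+a-2)/(t+a-1))^2$. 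This is equivalent to $(t+a-1)^2\leq(t+a)(t+a-2)$, which is false: the left side exceeds the right by exactly $1$. So the per-step contraction must instead be compared across the whole product. Telescoping $\prod_{j}(1-2/(j+a))=\frac{(S+a-2)(S+a-1)}{(t+a-1)(t+a)}$ gives a bound $\lesssim(\eta_{t}/\eta_{S-1})^{2}$ times the starting value, which is dominated by $(\eta_{t-1}/\eta_{S-1})^p$ when $p\le 2$. This explains the index shift to $t-2$ in the statement.

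For this reason I would not do a one-step induction on the maximum directly. Instead, I would follow Wirth et al.: let $s\le t$ be the last index before $t+1$ at which Case 2 (below) occurred. On $(s,t]$ only Case 1 steps happen, so the telescoped product bound applies, starting either from $u_S$ or from the Case 2 bound at $s+1$. The product estimate above then converts $(\eta_{s-1})^{p}$ into $(\eta_{t-1})^{p}$, since $(j+a-2)(j+a-1)$ against $(j+a-1)^2$ behaves as a ratio of $\eta$-squares.

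\emph{Case 2: $u_t^{1-\theta}<\eta_tB/A$.} Then $u_t<(\eta_tB/A)^p$. Dropping the nonpositive term, the recurrence gives $u_{t+1}\leq u_t+\eta_t^2BC\leq(\eta_tB/A)^p+\eta_t^2BC$, which is at most the second entry of $\Phi_{t+1}$ because $\eta_t\leq\eta_{t-1}$ and the second entry of $\Phi_{t+1}$ uses $\eta_{t-1}$. Subsequent Case 1 steps then shrink this value by $\prod(1-2/(j+a))$, and both summands scale no slower than $\eta^p$ and $\eta^2$ respectively. The product estimate above keeps them within the second entry at the shifted index.

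The main obstacle is exactly the index bookkeeping in Case 1: showing that the product $\prod_{j=s}^{t-1}(1-2/(j+a))$ is bounded by $(\eta_{t-2}/\eta_{s-1})^{2}$ (and hence by its $p$-th-power analogue, since the ratio is at most $1$ and $p\leq2$). This should reduce to the exact telescoped identity together with the elementary inequality $(s+a-2)(s+a-1)/((t+a-1)(t+a))\leq((s+a-1)/(t+a-2))^2$. Once that is in place, both cases close, and the maximum form of \eqref{eq:sequence-bound} follows.
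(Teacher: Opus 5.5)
Your argument is correct, though it is organized differently from the paper's. You rightly observe that a one-step induction on the target bound $\Phi_t$ cannot close in the contraction case $Au_t^{1-\theta}\geq\eta_tB$ (the deficit is $(t+a-1)^2-(t+a)(t+a-2)=1$). You repair this by looking back. For a given $t$, you restart at the last index $S\leq s<t$ with $u_s^{1-\theta}<\eta_sB/A$, which gives $u_{s+1}\leq(\eta_sB/A)^p+\eta_s^2BC$ without any inductive hypothesis. You then bound the following run of contraction steps by the telescoped product $\prod_{j=s+1}^{t-1}(1-2/(j+a))=\eta_{t-2}\eta_{t-1}/(\eta_{s-1}\eta_s)\leq(\eta_{t-2}/\eta_s)^2\leq(\eta_{t-2}/\eta_s)^p$. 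If no such $s$ exists, the same product taken from $S$ gives the first branch. The paper instead builds exactly this telescoped product into the induction hypothesis, proving the stronger invariant
\[
 u_t\leq\max\Bigl\{\Bigl(\frac{\eta_{t-2}\eta_{t-1}}{\eta_{S-2}\eta_{S-1}}\Bigr)^q u_S,\ \Bigl(\eta_{t-2}\eta_{t-1}\frac{B^2}{A^2}\Bigr)^q+\eta_{t-2}\eta_{t-1}BC\Bigr\},\qquad q=\frac{1}{2(1-\theta)}.
\]
For this invariant, the one-step factor $1-\lambda\eta_t\leq\eta_t/\eta_{t-2}$ maps $\eta_{t-2}\eta_{t-1}$ exactly onto $\eta_{t-1}\eta_t$, and $q\leq1$ absorbs the powers, so a purely local induction closes. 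Only at the end does the paper relax via $\sqrt{\eta_{t-2}\eta_{t-1}}\leq\eta_{t-2}$ and $\sqrt{\eta_{S-2}\eta_{S-1}}\geq\eta_{S-1}$. The paper's route needs no history bookkeeping once the right invariant is found. Yours needs no clever invariant, but you must handle two edge cases. At $t=s+1$ the product is empty, so use $\eta_{t-1}\leq\eta_{t-2}$ directly. At $t=S$ the ratio $\eta_{t-2}/\eta_{S-1}$ exceeds $1$, so the squared bound must not be converted to the $p$-th power; use the base case instead. In the write-up you should also drop the abandoned one-step framing. Finally, fix the index in your last paragraph: $\prod_{j=s}^{t-1}(1-2/(j+a))$ telescopes to $(s+a-2)(s+a-1)/\bigl((t+a-2)(t+a-1)\bigr)$, not to a quotient with denominator $(t+a-1)(t+a)$; the inequality you need still holds.
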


\begin{proof}
Set $q=1/(2(1-\theta))\in[1/2,1]$.  We first prove by induction the stronger estimate
\begin{equation}
 u_t\leq\max\left\{
 \left(
 \frac{\eta_{t-2}\eta_{t-1}}
      {\eta_{S-2}\eta_{S-1}}
 \right)^q u_S,
 \left(\eta_{t-2}\eta_{t-1}\frac{B^2}{A^2}\right)^q
 +\eta_{t-2}\eta_{t-1}BC
 \right\}.
 \label{eq:strong-sequence-bound}
\end{equation}
The case $t=S$ follows from the first branch.  Assume \eqref{eq:strong-sequence-bound} at an index $t<T$.

If
\[
 u_t\leq\left(\frac{\eta_tB}{A}\right)^{1/(1-\theta)},
\]
then $(1-\lambda\eta_t)u_t\leq u_t$, while the nonlinear term in \eqref{eq:sequence-recurrence} is nonpositive.  Using also $C_t\leq C$ and $\eta_t\leq\eta_{t-1}$ gives
\[
 u_{t+1}
 \leq u_t+\eta_t^2BC
 \leq\left(\frac{\eta_tB}{A}\right)^{1/(1-\theta)}
      +\eta_t^2BC
 \leq
 \left(\eta_{t-1}\eta_t\frac{B^2}{A^2}\right)^q
 +\eta_{t-1}\eta_tBC,
\]
which is the second branch of \eqref{eq:strong-sequence-bound} at $t+1$.

If instead $u_t\geq(\eta_tB/A)^{1/(1-\theta)}$, then $A u_t^{1-\theta}\geq\eta_tB$.  The last two terms in \eqref{eq:sequence-recurrence} have a nonpositive sum.  Moreover,
\begin{equation}
 1-\lambda\eta_t
 =\frac{t+a-\lambda a}{t+a}
 \leq\frac{t+a-2}{t+a}
 =\frac{\eta_t}{\eta_{t-2}},
 \label{eq:schedule-contraction}
\end{equation}
where the inequality is equivalent to $\lambda a\geq2$.  Consequently,
\[
 u_{t+1}\leq\frac{\eta_t}{\eta_{t-2}}u_t.
\]
Multiplying the induction bound by $\eta_t/\eta_{t-2}$ gives \eqref{eq:strong-sequence-bound} at $t+1$.  Indeed, for either branch this uses
\[
 \frac{\eta_t}{\eta_{t-2}}
 (\eta_{t-2}\eta_{t-1})^q
 \leq(\eta_{t-1}\eta_t)^q,
\]
which follows from $\eta_t/\eta_{t-2}\in(0,1]$ and $q\leq1$.  For the linear term, the corresponding relation holds with equality:
\[
 \frac{\eta_t}{\eta_{t-2}}
 \eta_{t-2}\eta_{t-1}BC
 =\eta_{t-1}\eta_tBC.
\]
This completes the induction.

Finally,
\[
 \sqrt{\eta_{t-2}\eta_{t-1}}\leq\eta_{t-2},
 \qquad
 \sqrt{\eta_{S-2}\eta_{S-1}}\geq\eta_{S-1}.
\]
Since $2q=1/(1-\theta)$, they imply
\[
 \left(
 \frac{\eta_{t-2}\eta_{t-1}}
      {\eta_{S-2}\eta_{S-1}}
 \right)^q
 \leq
 \left(\frac{\eta_{t-2}}{\eta_{S-1}}\right)^{1/(1-\theta)}
\]
and
\[
 \left(\eta_{t-2}\eta_{t-1}\frac{B^2}{A^2}\right)^q
 \leq
 \left(\frac{\eta_{t-2}B}{A}\right)^{1/(1-\theta)}.
\]
Moreover, $\eta_{t-2}\eta_{t-1}\leq\eta_{t-2}^2$.  Substitution into \eqref{eq:strong-sequence-bound} gives \eqref{eq:sequence-bound}.
\end{proof}

\begin{remark}[Why $a>2$ is required]
\label{rem:why-a-greater-two}
Within recurrence~\eqref{eq:sequence-recurrence}, the condition $\lambda a\geq2$ is necessary to guarantee the $O(t^{-2})$ rate when $\theta=1/2$.  Indeed, taking $C_t=0$ and equality in the resulting linear recurrence gives
\[
 u_t
 =u_S\prod_{j=S}^{t-1}\left(1-\frac{\lambda a}{j+a}\right)
 =u_S\frac{\Gamma(t+a-\lambda a)\Gamma(S+a)}
              {\Gamma(S+a-\lambda a)\Gamma(t+a)}
 =\Theta(t^{-\lambda a}).
\]
Thus no argument based only on this recurrence can guarantee $O(t^{-2})$ when $\lambda a<2$.  In the gap splitting used in Theorem~\ref{thm:abstract-acceleration}, choosing $\lambda=2/a$ leaves the fraction $1-2/a$ for the nonlinear scaling term, which is positive exactly when $a>2$.
When $a=2$, the condition requires $\lambda=1$, so the nonlinear scaling term disappears.  This limitation is specific to the recurrence analysis and does not rule out quadratic convergence of OL2 on particular problems.
\end{remark}

\bibliography{references}

\section*{Statements and Declarations}

\noindent\textbf{Funding} No funding was received to assist with the preparation of this manuscript.

\noindent\textbf{Competing Interests} The author has no relevant financial or non-financial interests to disclose.

\noindent\textbf{Data Availability Statement} No datasets were generated or analyzed in this study.

\end{document}